\numberwithin{equation}{section}
\newtheorem{lem}[equation]{Lemma}
\newtheorem{thm}[equation]{Theorem}
\newtheorem{cor}[equation]{Corollary}
\theoremstyle{definition}
\newtheorem{defn}[equation]{Definition}
\newcommand{\pref}[1]{\Cref{#1}}
\newcommand{\abs}[1]{\left\lvert#1\right\rvert}     
\newcommand{\comps}[1]{\left\langle#1\right\rangle} 
\newcommand{\cplx}{\mathbb{C}}                      
\newcommand{\lie}[1]{\mathfrak{#1}}                 
\newcommand{\inprod}[2]                             
    {\left\langle#1,#2\right\rangle}
\newcommand{\norm}[1]{\left\|#1\right\|}
\newcommand{\ints}{\mathbb{Z}}                      
\newcommand{\real}{\mathbb{R}}                      
\newcommand{\unv}[1]                                
    {\mathcal{U}\!\left(\lie{#1}\right)}            
\newcommand{\repf}{\mathcal{R}}                     
\DeclareMathOperator{\ad}{ad}
\DeclareMathOperator{\ann}{ann}
\DeclareMathOperator{\Irr}{Irr}
\DeclareMathOperator{\supp}{supp}
\begin{document}
\title[On the kernel of the maximal flat Radon transform]
    {On the kernel of the maximal flat Radon transform on symmetric spaces
    of compact type}
\author{Eric L. Grinberg}
\author{Steven Glenn Jackson}
    \address{Department of Mathematics \\
             University of Massachusetts \\
             100 Morrissey Boulevard \\
             Boston, MA 02125 \\
             USA}
    \email{\href{mailto:jackson@math.umb.edu}{jackson@math.umb.edu}}
    \urladdr{\href{http://www.math.umb.edu/~jackson/}{http://www.math.umb.edu/~jackson/}}
\subjclass[2010]{Primary 44A12; Secondary 53C35, 53C65, 22E46}
\keywords{Integral geometry, Radon transform, symmetric space}
\begin{abstract}
    Let $M$ be a Riemannian globally symmetric space of compact type, $M'$ its
    set of maximal flat totally geodesic tori, and $\ad(M)$ its adjoint space.
    We show that the kernel of the maximal flat Radon transform $\tau:L^2(M)
    \rightarrow L^2(M')$ is precisely the orthogonal complement of the image
    of the pullback map $L^2(\ad(M))\rightarrow L^2(M)$.  In particular, we
    show that the maximal flat Radon transform is injective if and only if $M$
    coincides with its adjoint space.
\end{abstract}
\maketitle


\section{Introduction}

As part of his 1911 doctoral dissertation (published as \cite{funk13}), Paul
Funk considered the problem of recovering a function $f$ on the sphere $S^2$
from its integrals over great circles.  He showed that precisely the even part
$f^+(x)=\frac{1}{2} \left( f(x) + f(-x) \right)$ of $f$ can be recovered from
these integrals, while the odd part is annihilated by them.  Thus was born the
Funk transform---integration over closed geodesics in a Riemannian manifold.

Funk's original motivation was a problem in differential geometry suggested to
him by his advisor David Hilbert: the study of surfaces all of whose geodesics
are closed.  The sphere is the obvious example of such a surface, and Darboux
around 1884 had given a necessary condition for other surfaces of revolution
to possess this property.\footnote{It is difficult now to assign an exact date
to Darboux's condition.  In \cite{tan92}, Tannery writes:
\foreignlanguage{french}{\emph{``J'y ai \'et\'e conduit en \'etudiant la note XV
de la M\'ecanique de Despayrons, o\`u M.~Darboux a donn\'e une r\`egle
pour trouver les surfaces de r\'evolution admettant des lignes
g\'eod\'esiques ferm\'ees.''}}  However, the authors of the present paper
were unable to obtain this work; indeed, we could locate no reference to
it but this and an entry in the June 24, 1905 issue of the
\emph{Publisher's Circular} dating it to 1884--85.  The earliest
publication of Darboux's result still readily obtainable seems to be
\cite{darb94}.}
In 1892, Tannery had constructed an explicit, though singular, example
(\cite{tan92}), and in 1901 Otto Zoll, another of Hilbert's students, had
constructed smooth examples (published as \cite{zoll03}).  Funk's dissertation
sought to identify smooth deformations of the standard metric on $S^2$, all of
whose geodesics remain closed at each stage of the deformation.

Funk's result on integration over great circles implied that no such
deformation could exist if the deformed metrics were required to remain even
(nowadays we say that the standard metric on $\mathbb{RP}^2$ admits no such
deformations).  Conversely, given any odd function $h$, Funk tried to
construct by power series a conformal deformation of the standard metric with
initial derivative $h$, but was unable to prove convergence of his series.
Convergence of such deformations was finally proved by Guilleman in 1976
(\cite{guillemin76}).  Thus, for $S^2$ and $\mathbb{RP}^2$ at least, the
kernel of the Funk transform controls the ``Zoll rigidity'' of the
manifold.\footnote{To be precise, let us say that $g$ is a \emph{$P_l$-metric}
if all its geodesics are periodic with least period $l$; that $\{g_t\}$ is a
\emph{$P_l$-deformation} of $g_0$ if $g_t$ is a $P_l$-metric for all $t$; that
the deformation $\{g_t\}$ is \emph{trivial} if $(M,g_t)$ is isometric to
$(M,g_0)$ for all $t$; and that $g$ is \emph{$P_l$-rigid} (or \emph{Zoll
rigid}) if every $P_l$-deformation of $g$ is trivial.  Then Funk's results
showed that $\mathbb{RP}^2$ is Zoll rigid, while Guillemin's convergence
argument showed that every odd function gives rise to a 
$P_l$-deformation of $S^2$.} These results were extended to $S^n$ and
$\mathbb{RP}^n$, for arbitrary $n$, by Michel (\cite{mich73}) and Weinstein
(see \cite[p.~120]{besse78}).

Around this time, similar integral transforms arose in several related
contexts.  A smooth family $\{g_t\}$ of Riemannian metrics on a compact
manifold $M$ is said to be \emph{isospectral} if the Laplace spectrum of
$(M,g_t)$ is independent of $t$.  Such families are plentiful: given any local
one-parameter group of diffeomorphisms $\phi_t:M\rightarrow M$ and any metric
$g$ on $M$, one obtains an isospectral family by setting $g_t$ equal to the
pullback of $g$ along $\phi_t$.  Examples of this type are trivial in the
sense that $(M,g_t)$ is isometric to $(M,g)$ for every $t$.  One says that
$(M,g)$ is \emph{spectrally rigid} if every isospectral deformation is trivial
in this sense, and \emph{spectrally rigid to first order} if every isospectral
deformation agrees with a trivial one to first order in the deformation
parameter $t$.

In the case of Riemannian globally symmetric spaces of compact type, results
of Guillemin in \cite{gui79} implied that spectral rigidity is related to a
certain natural ``thickening'' of the Funk transform.  To be specific, let us
refer to any integral transform obtained by integrating over a fixed class of
totally geodesic submanifolds of fixed dimension as a ``Radon transform.''
(Thus the Funk transform is the Radon transform associated with compact
submanifolds of dimension one.)  For the case where $M$ is a Riemannian
globally symmetric space of compact type, Guillemin's results implied that the
initial derivative of any isospectral deformation must lie in the kernel of
the Radon transform associated with maximal totally geodesic flat tori.  One
says that $M$ is \emph{Guillemin rigid} if the Lie derivatives of the metric
(i.e.\ the initial derivatives of trivial deformations) fill the entire kernel
of the maximal flat Radon transform for symmetric two-forms.\footnote{See
\cite{gasqui04} for the definition of the Radon transform on symmetric
$p$-forms.}  Then Guillemin rigidity is a sufficient condition for first-order
spectral rigidity.  Since the kernel of the Funk transform is contained in the
kernel of the maximal flat Radon transform, an observation of Michel
(\cite[Proposition~2.2.4]{mich73}) implies that Guillemin rigidity is also a
sufficient condition for first-order Zoll rigidity of $P_l$-metrics.

Each Riemannian globally symmetric space $M$ of compact type admits a unique
\emph{adjoint form} (or \emph{universal covered space}): another symmetric
space admitting $M$ as a Riemannian cover, and not itself properly covering
any other symmetric space.  (For example, the adjoint form of the sphere $S^n$
is the real projective space $\mathbb{RP}^n$.) In \cite{gri92}, Grinberg
conjectured that the maximal flat Radon transform for functions is injective
if and only if $M$ coincides with its adjoint form.

In a monumental study (\cite{gasqui04}), Gasqui and Goldschmidt determined
precisely which Grassmannians (over $\mathbb{R}$, $\mathbb{C}$, or 
$\mathbb{H}$) are Guillemin rigid.  Aware of Grinberg's conjecture, they
observed that their criterion is equivalent to the requirement that $M$
coincide with its adjoint form, and went on to prove that, if Grinberg's
conjecture is true, then the only candidates for Guillemin rigidity are
the irreducible adjoint spaces.

In the present paper, we prove a refinement of Grinberg's conjecture: the
kernel of the maximal flat Radon transform consists precisely of those
functions orthogonal to all functions pulled back from the adjoint form
(\pref{thm:main}).  Applied to the sphere $S^2$, this recovers Funk's result
of 1911: the kernel of the Funk transform consists of those functions
orthogonal to all functions pulled back from $\mathbb{RP}^2$ (i.e.\ the
functions orthogonal to all even functions, namely the odd functions).  Our
proof is based on elementary representation theory of the isometry group of
$M$, and treats all Riemannian globally symmetric spaces of compact type
uniformly; there is no case-by-case analysis.

The organization of the paper is as follows: in \pref{sec:prelim} we fix
notation and gather basic tools; in \pref{sec:reynolds} we show that
the maximal flat Radon transform can be viewed as a Reynolds operator
for a certain group of isometries of $M$; in \pref{sec:support} we
relate this Reynolds operator to highest weight theory; in \pref{sec:adjoint}
we relate the resulting highest weight theory to the extraction of
the adjoint form; and in \pref{sec:main} we use our accumulated results
to prove Grinberg's conjecture.


\section{Preliminaries}
\label{sec:prelim}

In this section, we fix notation and gather some useful results from the
    literature.

\subsection{Symmetric spaces and isometry groups}

Let $M$ be a Riemannian globally symmetric space of compact type, and let $G$ be
    the identity component of the isometry group of $M$.
Then $G$ is a compact semisimple Lie group (\cite[Lemma~IV.3.2, Definitions~V.1,
    and Definition~V.4]{helgason78}).
Denote its Lie algebra by $\lie{g}_0$, and put $\lie{g}=\cplx\otimes_{\real}%
    \lie{g}_0$.

Fix a point $p\in M$, let $\sigma$ be the geodesic symmetry of $M$ at $p$,
    let $K$ be the isotropy group of $p$ in $G$, and let $K_0$ be the identity
    component of $K$.
Define $\theta:G\rightarrow G$ by $\theta(g)=\sigma g\sigma^{-1}$; then
    $\theta$ is an involution of $G$.
Letting $K_{\theta}$ be the set of fixed points of $\theta$, we have
    $K_0\subseteq K\subseteq K_{\theta}$ (\cite[Theorem~IV.3.3]{helgason78}).
These three groups share a common Lie algebra $\lie{k}_0$ (namely the set of
    fixed points of $d\theta$), whose complexification we denote by $\lie{k}$.

Since $G$ is transitive on $M$ (\cite[Theorem~IV.3.3]{helgason78}), we have
    $$M\simeq G/K.$$

\subsection{The adjoint form}

Let $Z$ be the center of $G$, and define $$K_Z=\{g\in G\,|\,\theta(g)^{-1}g\in%
    Z\}.$$
Evidently this is a subgroup of $G$ containing $K$.
Since $G$ is semisimple, $Z$ is finite, so $K$ has finite index in $K_Z$.

Let $\ad(G)=G/Z$ denote the adjoint group of $G$.
Since $\theta$ stabilizes $Z$, we have an induced involution on $\ad(G)$, whose
    fixed point set we denote by $\ad(G)^{\theta}$.
One shows easily that $K_Z$ is the pre-image of $\ad(G)^{\theta}$ under
    the natural projection $G\rightarrow\ad(G)$; hence the space
    $$\ad(M)=G/K_Z\simeq \ad(G)/\ad(G)^{\theta}$$ is again a Riemannian
    globally symmetric space of compact type; we refer to it as the
    \emph{adjoint form} of $M$.

Since $K$ has finite index in $K_Z$, the natural map $\pi:M\rightarrow\ad(M)$
    is a covering map.
In fact, $\ad(M)$ is minimal in the isogeny class of $M$, by which we mean that
    it is covered by every Riemannian globally symmetric space whose
    universal cover is isometric to that of $M$ (\cite[Corollary~VII.9.3]
    {helgason78}).

\subsection{Representative functions}

Let $H$ be any closed subgroup of $G$.
Since $G$ and $H$ are both compact, they are both unimodular and the
    homogeneous space $G/H$ admits a unique normalized left Haar measure
    $\mu_{G/H}$ (\cite[Theorem~29E, Lemma~30A, and Theorem~33D]{loomis53}).

Define an action of $G$ on $L^2(G/H)$ by the formula $(gf)(x)=f(g^{-1}x)$.
Since the inner product $(f_1,f_2)=\int f_1\overline{f_2}\,d\mu_{G/H}$ is
    invariant under this action, $L^2(G/H)$ is a unitary (hence continuous)
    representation of $G$.

Let $C(G/H)$ denote the algebra of continuous functions on $G/H$.
Let $\repf(G/H)$ denote the set of $G$-finite vectors in $C(G/H)$.
The elements of $\repf(G/H)$ are called \emph{representative functions}
    on $G/H$.
Since $G$ is compact, we have the inclusions $$\repf(G/H)\subseteq%
    C(G/H)\subseteq L^2(G/H).$$
By the Peter-Weyl Theorem (\cite[Theorem~III.5.7]{bd85}), the algebra
    $\repf(G/H)$ is dense in $L^2(G/H)$. 

In the case where $H$ is the trivial subgroup, we can say more.
Define an action of $G\times G$ on $L^2(G)$ by the formula $((g_1,g_2)f)(x)%
    =f(g_1^{-1}xg_2)$.
Then (again since $G$ is unimodular) this action is also unitary.
Let $e$ denote the identity element of $G$, and let $LG$ and $RG$
    denote the groups $G\times\{e\}$ and $\{e\}\times G$, respectively.
Then $f\in L^2(G)$ is $LG$-finite if and only if it is $RG$-finite
    (\cite[Proposition~III.1.2]{bd85}).
Moreover, \cite[Proposition~III.1.5]{bd85} gives the $G\times G$-module
    decomposition $$\repf(G) \simeq\bigoplus_{V\in\Irr(G)} V\otimes V^*$$
    where $\Irr(G)$ denotes the set of isomorphism classes of irreducible
    representations of $G$.
(Since $G$ is compact, it follows from the Peter-Weyl Theorem that all of its
    irreducible representations are finite-dimensional.)

Returning to the case of arbitrary closed $H$, we note that the natural
    projection $G\rightarrow G/H$ induces an injection $\repf(G/H)%
    \rightarrow\repf(G)$ intertwining the (left) action of $G$.
Identifying $\repf(G/H)$ with its image under this injection, we shall
    regard $\repf(G/H)$ as a subalgebra of $\repf(G)$.
From \cite[Example~III.6.3]{bd85} we have 
    $$\repf(G/H) \simeq\bigoplus_{V\in\Irr(G)}V\otimes\left(V^*\right)^H$$
    where $(V^*)^H$ denotes the space of $H$-invariant vectors in $V^*$.
More generally, whenever $H_1\subseteq H_2$ are both closed subgroups of
    $G$, we regard $\repf(G/H_2)$ as a subalgebra of $\repf(G/H_1)$ (and hence
    also a subalgebra of $\repf(G)$).

One says that $H$ is a \emph{spherical subgroup} of $G$ if
    $\dim\left((V^*)^H\right) \leq1$ for all $V\in\Irr(G)$.
By passing to the complexified Lie algebra $\lie{k}$ and using the proof of
    \cite[Theorem~12.3.12]{gw09}, one sees that $K_0$ is spherical.
Hence $K$ and $K_Z$ are also spherical.

\subsection{The torus transform}

Let $T$ be a maximal flat totally geodesic torus of $M$ containing $p$.
Let $\lie{a}_0$ be the tangent space to $T$ at $p$, and let $\lie{p}_0%
    \subset\lie{g}_0$ be the $-1$-eigenspace of $d\theta$.
By \cite[Proposition~V.6.1]{helgason78}, the differential of the map
    $g\mapsto gp$ identifies $\lie{a}_0$ with a maximal abelian subspace of
    $\lie{p}_0$.

By \cite[Theorem V.6.2]{helgason78}, $G$ is transitive on the set $M'$ of all
    maximal flat totally geodesic tori in $M$.
Let $L$ be the stabilizer of $T$ in $G$.
Then $M'$ is in bijective correspondence with the homogeneous space $G/L$, with
    which we shall now identify it.

Let $A=\exp(\lie{a}_0)$ be the Lie subgroup of $G$ with Lie algebra $\lie{a}_0$,
    and let $\overline{A}$ be the closure of $A$.
Then $\overline{A}$ is a compact abelian subgroup of $G$ whose elements
    satisfy the equation $\theta(a)=a^{-1}$.
It follows that the Lie algebra of $\overline{A}$ is an abelian subspace of
    $\lie{p}_0$ containing $\lie{a}_0$.
But $\lie{a}_0$ is maximal abelian in $\lie{p}_0$, so we have equality, and
    $A$ is in fact a closed subgroup of $G$ (hence a torus).

Since $T$ is totally geodesic, it follows from \cite[Theorem~IV.3.3]
    {helgason78} that the geodesics of $T$ passing through $p$ are precisely
    the orbits of $p$ under one-parameter subgroups of $A$.
But every point of $T$ is joined to $p$ by some such geodesic, so it follows
    that $A$ acts transitively on $T$.
Evidently the full stabilizer $L$ is also transitive on $T$, so we can write
    $$T\simeq L/(L\cap K)\simeq A/(A\cap K).$$

The normalized Haar measure $\mu_{L/(L\cap K)}$ is an $A$-invariant
    measure on $T$, so the uniqueness of such measures forces $\mu_{L/(L\cap K)}%
    =\mu_{A/(A\cap K)}$.
For brevity, let us denote this measure by $\mu_T$, and define a map
    $\tau_0:\repf(M)\rightarrow\repf(M')$ by the formula
    $$(\tau_0(f))(gT) = \int_{x\in T} f(gx)\,d\mu_T(x).$$
That $\tau_0(f)$ depends only on the torus $gT$ and not on $g$ itself follows
    from the $L$-invariance of $\mu_T$; that it depends continuously on $gT$
    follows from the compactness of $G$ and $T$; and that it is $G$-finite follows
    from the fact that $\tau_0$ intertwines the left action of $G$.

In the sequel we shall show that $\norm{\tau_0(f)}_2\leq\norm{f}_2$
    (\pref{cor:bounded}), from which it will follow that $\tau_0$ admits a unique
    bounded extension $\tau:L^2(M)\rightarrow L^2(M')$.
This extension is known in the literature as the \emph{maximal flat Radon
    transform} on $L^2(M)$; we shall call it the \emph{torus transform} for
    short.

\subsection{Weights and representations}

Let $\lie{h}_0$ be any maximal abelian subspace of $\lie{g}_0$ containing
    $\lie{a}_0$.
Then by \cite[Lemma~VI.3.2]{helgason78}, $\lie{h}_0$ is a Cartan subalgebra of
    $\lie{g}_0$, and it is stabilized by the action of $\theta$.
Denote by $\lie{t}_0$ be the set of $\theta$-fixed points in $\lie{h}_0$.
Since $\lie{a}_0$ is maximal among abelian subspaces of $\lie{p}_0$, we have
    $\lie{h}_0 = \lie{t}_0 \oplus \lie{a}_0$.
Let $H$ denote the maximal torus of $G$ with Lie algebra $\lie{h}_0$.

Next let $\lie{a}$, $\lie{t}$, and $\lie{h}$ denote the complexifications of
    $\lie{a}_0$, $\lie{t}_0$, and $\lie{h}_0$, respectively.
The complexification of $d\theta$ stabilizes $\lie{a}$, $\lie{t}$,
    and $\lie{h}$.
From now on we shall use the same symbol $\theta$ to denote the involution
    of $G$, its differential, the complexification of its differential,
    and the induced involutions on the duals of all the vector spaces on
    which these act.

Let $\Delta\subset\lie{h}^*$ be the root system of $\lie{g}$ with respect to
    $\lie{h}$, and let $\Sigma\subset\lie{a}^*$ be the restricted root system
    of the pair $(\lie{g},\lie{k})$ (i.e.\ the set of non-zero restrictions
    of members of $\Delta$ to $\lie{a}$).
Choose a positive system $\Delta^+$ for $\Delta$ such that the non-zero
    restrictions of its members form a positive system $\Sigma^+$ for
    $\Sigma$.
Let $\Pi\subseteq\Delta^+$ and $\Pi'\subseteq\Sigma^+$ be the resulting sets
    of simple roots.

Denote by $B$ the Killing form of $\lie{g}$.
Since $B$ has non-degenerate restriction to $\lie{h}$, it defines an isomorphism
    $\lie{h}\rightarrow\lie{h}^*$, by means of which we transfer $B$ to an inner
    product $(\cdot,\cdot)$ on $\lie{h}^*$.
Then $B$ is negative definite on $\lie{h}_0$, but $(\cdot,\cdot)$ is positive
    definite on $\real\Delta$, the real span of $\Delta$
    (\cite[Proposition II.6.6, Corollary~II.6.7, and
    Theorem~III.4.4]{helgason78}).
For any $\alpha,\beta\in\real\Delta$ with $\beta\neq0$, define $$\comps{\alpha,\beta}=%
    \frac{2(\alpha,\beta)}{(\beta,\beta)}.$$

Since the restriction of $B$ to $\lie{a}$ is also non-degenerate, we can use
    the same procedure to define an inner product on $\lie{a}^*$.
With this inner product, $\lie{a}^*$ is naturally isometric to $\ann(\lie{t})%
    \subseteq\lie{h}^*$, with which we shall now identify it.

Let $W$ denote the normalizer of $\lie{h}_0$ in $G$, modulo its centralizer.
Then $W$ acts faithfully on $\lie{h}_0$, hence also on $\lie{h}$ and its dual.
One finds that $W$ is generated by the root reflections $\{s_{\alpha}\,|\,%
    \alpha\in\Delta\}$; hence $W$ stabilizes $\real\Delta$.
We say that a weight $\lambda\in\real\Delta$ is \emph{dominant} if $(\lambda,%
    \alpha)\geq0$ for all $\alpha\in\Pi$.
Each element of $\real\Delta$ is $W$-conjugate to a unique dominant weight
    (\cite[Theorem~10.3 and Lemma~10.3B]{hum72}).  

Similarly, let $W_{\lie{a}}$ denote the normalizer of $\lie{a}_0$ in $K$,
    modulo its centralizer.
Then $W_{\lie{a}}$ acts faithfully on $\lie{a}_0$ and hence also on $\lie{a}$
    and its dual.
It is generated by the root reflections $\{s_{\alpha}\,|\,\alpha\in\Sigma\}$,
    so it stabilizes $\real\Sigma$.
A restricted weight $\nu\in\real\Sigma$ is \emph{dominant} if $(\nu,\alpha)%
    \geq0$ for all $\alpha\in\Pi'$.\
Each element of $\real\Sigma$ is $W_{\lie{a}}$-conjugate to a unique dominant
    restricted weight (\cite[Corollary~VII.2.13 and Theorem~VII.2.22]{helgason78}).

We say that a weight $\lambda\in\real\Delta$ is \emph{algebraically integral}
    if $\comps{\lambda,\alpha}\in\ints$ for all $\alpha\in\Pi$.
A weight $\omega\in\real\Delta$ is dominant and algebraically integral if and
    only if it is the highest weight of some finite-dimensional irreducible
    representation $V(\omega)$ of $\lie{g}$.
In this case every weight of $V(\omega)$ is algebraically integral
    (\cite[Theorem~5.5]{knapp02}).

We say that a weight $\lambda\in\real\Delta$ is \emph{analytically integral}
    if it is the complexified differential of a character of $H$.
In this case it is harmless to use the same symbol $\lambda$ to denote
    the character of which it is the differential, and to say that a subset
    $S\subseteq H$ is \emph{annihilated by $\lambda$} if $\lambda(s)=1$ for all
    $s\in S$.
Every analytically integral weight is algebraically integral.
A weight $\omega\in\real\Delta$ is dominant and analytically integral if and only
    if it is the highest weight of the differential of some irreducible
    representation of $G$, which we shall also denote by $V(\omega)$.
In this case every weight of $V(\omega)$ is analytically integral
    (\cite[Proposition~4.59, Lemma~5.106, and Theorem~5.110]{knapp02}).


\section{Properties of the torus transform}
\label{sec:reynolds}

In this section we prove that $\tau_0$ can be extended to a bounded operator
    on $L^2(M)$, and that it coincides with the restriction of a Reynolds
    operator on $\repf(G)$.

Recall that $\repf(M)$ can be identified with the algebra of right $K$-invariants
    in $\repf(G)$, and $\repf(M')$ can be identified with the algebra of right
    $L$-invariants.
We have

\begin{thm}
\label{thm:reynolds}
Let $R_A:\repf(G)\rightarrow\repf(G)$ be the operator orthogonally projecting
    $\repf(G)$ onto its space of right $A$-invariants, namely
    $$(R_A(f))(x) = \int_{a\in A}f(xa)\,d\mu_A(a)$$ where $\mu_A$ denotes
    normalized Haar measure on $A$.
Then for any $f\in\repf(M)$, we have $$\tau_0(f) = R_A(f).$$
\end{thm}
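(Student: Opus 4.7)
The plan is to unfold the definition of $\tau_0(f)$ using the identification $T \simeq A/(A\cap K)$ from the preceding subsection, then exploit the right $K$-invariance of $f$ to pull the integral back up from the quotient $A/(A\cap K)$ to all of $A$.

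The first step would be to identify $f \in \repf(M)$ with its right-$K$-invariant lift to $G$, so that $f(h)$ denotes $f(hp)$ for $h \in G$. Every $x \in T$ has the form $x = ap$ for some $a \in A$, uniquely modulo the stabilizer $A\cap K$ of $p$, and so $f(gx) = f(gap) = f(ga)$ in this convention. Substituting into the definition of $\tau_0(f)$ and using $\mu_T = \mu_{A/(A\cap K)}$ (also established in the preliminaries) gives
$$\tau_0(f)(g) = \int_{A/(A\cap K)} f(ga)\,d\mu_{A/(A\cap K)}(a(A\cap K)).$$

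The second step would use that $f$ is right $(A\cap K)$-invariant (since it is right $K$-invariant), so the integrand $a \mapsto f(ga)$ descends to $A/(A\cap K)$. Applying the standard integration formula
$$\int_A h(a)\,d\mu_A(a) = \int_{A/(A\cap K)}\int_{A\cap K} h(ab)\,d\mu_{A\cap K}(b)\,d\mu_{A/(A\cap K)}(a(A\cap K))$$
for normalized Haar measure on a compact group relative to a closed subgroup, the inner integral collapses to $h(a)$ and we conclude
$$\int_{A/(A\cap K)} f(ga)\,d\mu_{A/(A\cap K)}(a(A\cap K)) = \int_A f(ga)\,d\mu_A(a) = R_A(f)(g),$$
which is the desired identity.

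There is no genuine obstacle here; the argument is essentially a direct computation once the identifications $T \simeq A/(A\cap K)$ and $\mu_T = \mu_{A/(A\cap K)}$ are in place. As a pleasant byproduct, the equality $\tau_0(f) = R_A(f)$ forces $R_A(f)$ to be right $L$-invariant whenever $f \in \repf(M)$; this is not at all obvious from the definition of $R_A$ alone, and presumably plays a role in the sections to follow.
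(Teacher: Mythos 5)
Your proof is correct and takes essentially the same route as the paper's: both unfold $\tau_0$ via the identification $T\simeq A/(A\cap K)$ with $\mu_T=\mu_{A/(A\cap K)}$, and then use the quotient integration formula together with the right $K$-invariance of $f$ to lift the integral from $A/(A\cap K)$ up to $A$. The only difference is cosmetic: the paper justifies that integration formula explicitly by checking that the iterated integral defines an $A$-invariant normalized positive linear functional and invoking uniqueness of Haar integration, whereas you cite it as standard.
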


\begin{proof}
The torus $T$ is isomorphic to $A/(A\cap K)$.
The positive linear functional $I:C(A)\rightarrow\cplx$ defined by $$I(f) =%
    \int_{x(A\cap K)\in A/(A\cap K)}\left(\int_{y\in(A\cap K)}f(xy)\,%
    d\mu_{A\cap K}(y)\right)\,d\mu_{A/(A\cap K)}(x)$$ is $A$-invariant,
    preserves monotone limits, and satisfies $I(1)=1$; consequently $I$
    coincides with Haar integration over $A$ (\cite[Definition~12A and Theorem~29D]{loomis53}).
If $f$ is right $K$-invariant, then the inner integral is just $f(x)$
    (this is depends only on the coset $x(A\cap K)$) and we have
\begin{align*}
(\tau_0(f))(g) &= \int_{x(A\cap K)\in A/(A\cap K)} f(gx)\,d\mu_{A/(A\cap K)}(x) \\
&= \int_{a\in A} f(ga)\,d\mu_A(a) \\
&= (R_A(f))(g).
\end{align*}
\end{proof}

\begin{cor}
\label{cor:bounded}
For any $f\in\repf(M)$, we have $\norm{\tau_0(f)}_2\leq\norm{f}_2$.
\end{cor}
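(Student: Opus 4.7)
The plan is to read off the corollary from \pref{thm:reynolds} together with the fact that orthogonal projections are contractive. By the theorem, $\tau_0(f)=R_A(f)$ for every $f\in\repf(M)$, so it suffices to establish (a) that $R_A$ is an orthogonal projection on $L^2(G)$, hence non-expansive there, and (b) that under the normalized Haar measures the $L^2$-norm on each quotient $G/H$ agrees with the $L^2(G)$-norm of the associated right-$H$-invariant function on $G$.

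For (a), the formula $(R_A f)(x)=\int_A f(xa)\,d\mu_A(a)$ presents $R_A$ as averaging over right translation by the compact group $A$. A brief calculation using the right-invariance of $\mu_A$ shows that $R_A$ is idempotent and takes values in the space of right-$A$-invariant functions on $G$. Self-adjointness with respect to the $L^2(G)$ inner product is a Fubini exchange, combined with the change of variable $y=xa$ in the inner integral (legitimate by right-invariance of $\mu_G$, which holds by unimodularity of $G$) and the fact that $\mu_A$ is inversion-invariant (a compact group is unimodular). An idempotent self-adjoint operator is an orthogonal projection, so $\norm{R_A f}_{L^2(G)}\leq\norm{f}_{L^2(G)}$.

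For (b), the quotient integral formula (\cite[Theorem~29D]{loomis53}) gives
\begin{equation*}
    \int_G h\,d\mu_G = \int_{G/H}\int_H h(xk)\,d\mu_H(k)\,d\mu_{G/H}(xH),
\end{equation*}
and when $h=\abs{f}^2$ for a right-$H$-invariant $f$ the inner integral collapses to $\abs{f(x)}^2$, leaving $\int_G\abs{f}^2\,d\mu_G=\int_{G/H}\abs{f}^2\,d\mu_{G/H}$. Applying this with $H=K$ to $f\in\repf(M)$ and with $H=L$ to $\tau_0(f)\in\repf(M')$ identifies both norms in the target inequality with the corresponding $L^2(G)$-norms, and we conclude
\begin{equation*}
    \norm{\tau_0(f)}_{L^2(M')}=\norm{R_A f}_{L^2(G)}\leq\norm{f}_{L^2(G)}=\norm{f}_{L^2(M)}.
\end{equation*}

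I do not expect any substantive obstacle; the only delicate points are the Fubini manipulation establishing self-adjointness and the careful handling of normalization conventions on the various homogeneous spaces. Both are standard for compact unimodular groups and are already used tacitly in the identification $\mu_{L/(L\cap K)}=\mu_{A/(A\cap K)}$ preceding the theorem.
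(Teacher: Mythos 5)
Your proposal is correct and follows essentially the same route as the paper: identify $\repf(M)$ and $\repf(M')$ isometrically with right-invariant subspaces of $L^2(G)$ via the quotient integral formula, and then observe that under this identification $\tau_0$ becomes the restriction of the orthogonal projection $R_A$, which is non-expansive. The only difference is that you verify explicitly that $R_A$ is an orthogonal projection (idempotent and self-adjoint), whereas the paper takes this as already established in the statement of \pref{thm:reynolds}; this is a harmless elaboration, not a different argument.
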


\begin{proof}
As above, the positive linear functional $J:C(G)\rightarrow\cplx$ defined by
    $$J(f) = \int_{xK\in G/K}\left(\int_{y\in K}f(xy)\,d\mu_K(y)\right)\,%
    d\mu_{G/K}(x)$$ coincides with Haar integration on $G$.
It follows immediately that the injection $\repf(M)\rightarrow\repf(G)$ preserves
    $L^2$-norms.
Similarly, the injection $\repf(M')\rightarrow\repf(G)$ preserves $L^2$-norms.
But the identification of $\repf(M)$ and $\repf(M')$ with their images in $L^2(G)$
    sends $\tau_0$ to the restriction of an orthogonal projection operator.
\end{proof}


\section{Supports of $\lie{k}$-invariants}
\label{sec:support}

Let $\omega$ be an analytically integral dominant weight.
Since $K_0$ is a spherical subgroup of $G$, the irreducible $G$-module
    $V(\omega)$ admits at most a one-dimensional space of $\lie{k}$-invariants.
Let $v^{\omega}$ be a non-zero $\lie{k}$-invariant in $V(\omega)$, if one
    exists, and zero otherwise.
Write $$v^{\omega} = \sum_{\lambda} v^{\omega}_{\lambda}$$ where
    $v^{\omega}_{\lambda}$ is a weight vector of weight $\lambda$.

\begin{defn}
The \emph{support} of $\omega$ is the set $\supp(\omega)=
    \{\lambda\,|\,v^{\omega}_{\lambda}\neq0\}$.
\end{defn}

In this section, we give a necessary and sufficient condition for a weight
    $\lambda$ of $V(\omega)$ to lie in $\supp(\omega)$ (\pref{thm:support}).
We begin with a lemma.

\begin{lem}
\label{lem:helglem}
Let $\alpha$ be a positive root.
If $\theta(\alpha)\neq\alpha$ then $-\theta(\alpha)$ is also a positive
    root.
On the other hand, if $\theta(\alpha)=\alpha$ then the root spaces
    $\lie{g}_{\alpha}$ and $\lie{g}_{-\alpha}$ are both contained in $\lie{k}$.
\end{lem}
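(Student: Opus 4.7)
The plan is to first turn the hypothesis on $\theta(\alpha)$ into a statement about the restriction $\alpha|_{\lie{a}}$. Using the decomposition $\lie{h}=\lie{t}\oplus\lie{a}$, on which $\theta$ acts as $+1$ and $-1$ respectively, a direct computation gives $\theta(\alpha)|_{\lie{a}}=-\alpha|_{\lie{a}}$ and $\theta(\alpha)|_{\lie{t}}=\alpha|_{\lie{t}}$, so $\theta(\alpha)=\alpha$ if and only if $\alpha$ vanishes on $\lie{a}$. This dichotomy is the engine of the whole argument.

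For the first assertion, I would first note that $-\theta(\alpha)$ is automatically a root (both $\theta$ and $\lambda\mapsto-\lambda$ permute $\Delta$). To see it is positive, I would use the compatibility of $\Delta^+$ with $\Sigma^+$ in both directions. Forwards: since $\alpha\in\Delta^+$ and, by hypothesis, $\alpha|_{\lie{a}}\neq0$, we have $\alpha|_{\lie{a}}\in\Sigma^+$. Backwards: any root $\beta\in\Delta$ whose non-zero restriction lies in $\Sigma^+$ must itself be in $\Delta^+$, since otherwise $-\beta$ would be a positive root with restriction in $-\Sigma^+$, contradicting the compatibility of the chosen positive systems. Applying this to $-\theta(\alpha)$, whose restriction equals $\alpha|_{\lie{a}}\in\Sigma^+$, gives $-\theta(\alpha)\in\Delta^+$.

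For the second assertion, the starting observation is that when $\theta(\alpha)=\alpha$ the one-dimensional root space $\lie{g}_{\alpha}$ is $\theta$-stable, so $\theta$ acts on it by $\pm1$; equivalently $\lie{g}_{\alpha}$ lies entirely in $\lie{k}$ or entirely in $\lie{p}$. To exclude the $\lie{p}$ case, I would pick a non-zero $X\in\lie{g}_{\alpha}$ and observe that, because $\alpha|_{\lie{a}}=0$, we have $[\lie{a},X]=\alpha(\lie{a})X=0$. If $X$ were in $\lie{p}$, it would centralize $\lie{a}$ inside $\lie{p}$; but $\lie{a}$ is maximal abelian in $\lie{p}$ and coincides with its own centralizer there (standard for Cartan subspaces of symmetric pairs), forcing $X\in\lie{a}\subseteq\lie{h}$, which is impossible for a non-zero root vector. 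Hence $\lie{g}_{\alpha}\subseteq\lie{k}$. The same reasoning applied to $-\alpha$ (which also satisfies $\theta(-\alpha)=-\alpha$) gives $\lie{g}_{-\alpha}\subseteq\lie{k}$.

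The one ingredient beyond formal definitions is the self-centralizing property of $\lie{a}$ in $\lie{p}$, which I would simply cite from \cite{helgason78} rather than reprove. The only mildly delicate point is the two-way compatibility between $\Delta^+$ and $\Sigma^+$ used in the first part; everything else is a short computation once the action of $\theta$ on $\lie{h}^*$ has been made explicit.
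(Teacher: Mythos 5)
Your proof is correct, but it is worth noting that the paper does not actually prove this lemma at all: its ``proof'' is the single line ``This is \cite{helgason78} Lemma~VI.3.3.'' What you have written is a correct, self-contained reconstruction of the standard argument behind that citation. The key reduction --- that $\theta$ acts as $+1$ on $\lie{t}$ and $-1$ on $\lie{a}$, so $\theta(\alpha)=\alpha$ if and only if $\alpha|_{\lie{a}}=0$ --- is exactly right, and both halves of your argument go through: the positivity of $-\theta(\alpha)$ uses precisely the compatibility between $\Delta^+$ and $\Sigma^+$ that the paper arranges in its setup (``Choose a positive system $\Delta^+$ \dots{} such that the non-zero restrictions of its members form a positive system $\Sigma^+$''), and the two-way use of that compatibility is handled correctly; the second half correctly exploits the one-dimensionality of $\lie{g}_{\alpha}$ together with the fact that the centralizer of $\lie{a}$ in $\lie{p}$ is $\lie{a}$ itself (which follows from $\lie{a}_0$ being maximal abelian in $\lie{p}_0$, after complexifying). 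The trade-off is the obvious one: the paper's citation buys brevity, while your version makes visible exactly which structural hypotheses the lemma depends on, at the cost of importing one standard fact (the self-centralizing property of $\lie{a}$) that you would still cite from \cite{helgason78}.
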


\begin{proof}
This is \cite{helgason78} Lemma~VI.3.3.
\end{proof}

We shall also need a few facts concerning a certain partial order on $\lie{a}^*$ (which, recall, we have identified with $\ann(\lie{t}))\subseteq\lie{h}^*$).

\begin{defn}
For $\lambda, \mu \in \lie{a}^*\simeq\ann\lie{t}\subseteq\lie{h}^*$, define $\lambda\preceq\mu$ if and only if $\mu-\lambda$ lies in the non-negative span of $\Pi'$.
\end{defn}

\begin{lem}
\label{lem:preceq}
Suppose $\lambda\in\lie{a}^*$, and let $\mu$ be the unique dominant $W_{\lie{a}}$-conjugate of $\lambda$.
Then $\lambda\preceq\mu$.
\end{lem}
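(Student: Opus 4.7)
The plan is to prove the lemma by iteratively ascending from $\lambda$ to $\mu$ through a sequence of simple reflections in $W_{\lie{a}}$, each of which strictly increases the element in the $\preceq$ order. The key algebraic identity powering this is that if $\alpha\in\Pi'$ satisfies $(\nu,\alpha)<0$, then
\[
s_\alpha(\nu)=\nu-\comps{\nu,\alpha}\,\alpha=\nu+c\alpha\qquad\text{with }c=-\comps{\nu,\alpha}>0,
\]
so $s_\alpha(\nu)-\nu$ is a positive multiple of a simple root and hence $\nu\prec s_\alpha(\nu)$.

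I would first observe that $\preceq$ really is a partial order on $\real\Sigma$: the simple roots $\Pi'$ form a basis of $\real\Sigma$, so the cone they span is properly convex and $\preceq$ is antisymmetric (and transitivity and reflexivity are immediate). Then I would set up the ascent: starting from $\lambda_0=\lambda$, as long as $\lambda_i$ fails to be dominant there exists a simple root $\alpha_i\in\Pi'$ with $(\lambda_i,\alpha_i)<0$; set $\lambda_{i+1}=s_{\alpha_i}(\lambda_i)$, which lies in the $W_{\lie{a}}$-orbit of $\lambda$ and satisfies $\lambda_i\prec\lambda_{i+1}$ by the identity above.

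The sequence $\lambda_0\prec\lambda_1\prec\lambda_2\prec\cdots$ consists of pairwise distinct elements of the finite orbit $W_{\lie{a}}\cdot\lambda$ (distinctness follows from antisymmetry of $\preceq$), so it must terminate. The terminal element is by construction dominant and $W_{\lie{a}}$-conjugate to $\lambda$, so by the uniqueness of the dominant conjugate (cited from Helgason in the preliminaries) it equals $\mu$. Transitivity of $\preceq$ then yields $\lambda\preceq\mu$.

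The main obstacle is really just confirming termination cleanly; everything else is mechanical. I would handle termination by the finite-orbit + antisymmetry argument sketched above rather than trying to exhibit an explicit monovariant. If a one-line monovariant is wanted, one can alternatively note that in any infinite chain under $\preceq$ the heights (sum of coefficients in the $\Pi'$ expansion of the difference from $\lambda$) strictly increase, while the orbit is bounded, giving a contradiction.
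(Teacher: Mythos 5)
Your proof is correct and follows essentially the same route as the paper's: both hinge on the identity $s_{\alpha}(\nu)=\nu-\comps{\nu,\alpha}\alpha$, finiteness of the $W_{\lie{a}}$-orbit, and uniqueness of the dominant conjugate. The only difference is organizational --- you ascend explicitly from $\lambda$ by a chain of simple reflections, while the paper starts from an arbitrary $\preceq$-maximal element of the orbit and shows it must be dominant, hence equal to $\mu$.
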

\begin{proof}
The proof is essentially the same as the argument given in \cite[Theorem~VII.2.22]{helgason78}.
Let $\nu$ be any maximal element in the $W_{\lie{a}}$-orbit of $\lambda$.
Then for any $\alpha\in\Pi'$ we have $\nu-s_{\alpha}(\nu) = \comps{\nu,\alpha}\alpha.$
Maximality of $\nu$ now forces $\comps{\nu,\alpha}\geq0$.
Since $\alpha$ was arbitrary in $\Pi'$ this shows that $\nu$ is dominant, and hence $\nu=\mu$.
But now the $W_{\lie{a}}$-orbit is a finite poset with \emph{unique} maximal element $\mu$; hence $\mu$ is also a largest element and the lemma is proved.
\end{proof}

We now need a pair of lattices in $\ann(\lie{t})\subseteq\lie{h}^*$.

\begin{defn}
Let $\Lambda$ denote the set $\{\rho-\theta(\rho)\}$ where $\rho$ runs over the root lattice, and let $\widehat{\Lambda}$ denote the set $\{\mu-\theta(\mu)\}$ where $\mu$ runs over the lattice of algebraically integral weights.
\end{defn}

\begin{lem}
\label{lem:lattice}
Both $\Lambda$ and $\widehat{\Lambda}$ are discrete additive subgroups of $\ann(\lie{t})$ that span $\ann(\lie{t})$.
\end{lem}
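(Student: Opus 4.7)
The plan is to treat $1-\theta$ as, up to a factor of two, the projection of $\lie{h}^*$ onto the $(-1)$-eigenspace of $\theta$, which is precisely $\ann(\lie{t})$; everything then follows by chasing linearity.

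First I would note that $\theta$ acts on $\lie{h}=\lie{t}\oplus\lie{a}$ with $+1$-eigenspace $\lie{t}$ and $-1$-eigenspace $\lie{a}$, so its dual action on $\lie{h}^*$ has $+1$-eigenspace $\ann(\lie{a})$ and $-1$-eigenspace $\ann(\lie{t})$. Consequently $v-\theta(v)\in\ann(\lie{t})$ for every $v\in\lie{h}^*$, so both $\Lambda$ and $\widehat\Lambda$ lie in $\ann(\lie{t})$; they are additive subgroups because $1-\theta$ is linear and both the root lattice $Q$ and the algebraically integral weight lattice $P$ are additive.

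For discreteness, I would use that $\theta$ (being a Lie algebra automorphism that stabilizes $\lie{h}$) permutes the root system $\Delta$ and preserves the inner product $(\cdot,\cdot)$ transferred from the Killing form. These two properties together force $\theta$ to stabilize not only $Q$ but also $P$, because the integrality condition $\comps{\lambda,\alpha}\in\ints$ defining $P$ is preserved under any isometric permutation of $\Delta$. Consequently $\Lambda=(1-\theta)(Q)\subseteq Q-\theta(Q)=Q$, and similarly $\widehat\Lambda\subseteq P$. Since $Q$ and $P$ are full-rank lattices in $\real\Delta$, both $\Lambda$ and $\widehat\Lambda$ inherit discreteness.

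For the spanning claim, $Q$ and $P$ each $\real$-span $\real\Delta$, so under the $\real$-linear surjection $1-\theta\colon\real\Delta\to\real\Delta\cap\ann(\lie{t})=\real\Sigma$ (the real form of $\ann(\lie{t})$ tacitly meant in the statement) their images $\Lambda$ and $\widehat\Lambda$ each $\real$-span $\real\Sigma$. I expect no serious obstacle; the argument is essentially formal. The one verification worth pausing over is that $\theta$ stabilizes the weight lattice $P$ and not merely the root lattice $Q$, but this reduces immediately to $\theta$'s simultaneous role as a permutation of $\Delta$ and as an isometry of the pairing used to define algebraic integrality.
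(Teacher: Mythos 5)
Your proof is correct and follows essentially the same route as the paper's: subgroup structure from linearity of $\mu\mapsto\mu-\theta(\mu)$, discreteness from containment in the root (resp.\ weight) lattice, and spanning from surjectivity of $1-\theta$ onto the $(-1)$-eigenspace $\ann(\lie{t})$. Your explicit verification that $\theta$ stabilizes the weight lattice (via its action as an isometric permutation of $\Delta$) is a detail the paper leaves implicit, but it is the same argument.
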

\begin{proof}
Both are obviously additive subgroups.
The span of $\Lambda$ (resp.\ $\widehat{\Lambda}$) is all of $\ann(\lie{t})$ because the span of the root lattice (resp.\ the weight lattice) is all of $\lie{h}^*$, and the map $\mu\mapsto(\mu-\theta(\mu))$ is a surjective linear map of $\lie{h}^*$ onto $\ann(\lie{t})$.
Finally, $\Lambda$ (resp.\ $\widehat{\Lambda}$) is discrete because it is a subgroup of the root lattice (resp.\ the weight lattice).
\end{proof}
\begin{lem}
\label{lem:stable}
Both $\Lambda$ and $\widehat{\Lambda}$ are stable under the action of $W_a$.
\end{lem}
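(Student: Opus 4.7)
My plan is to reduce both statements to a single $\theta$-equivariance argument at the level of the ambient Weyl group $W$. The key reduction is the following lifting claim: every $w\in W_{\lie{a}}$ admits a representative $\tilde{w}\in W$ that commutes with $\theta$ on $\lie{h}^*$. Once this is established the proof is formal. Given such a $\tilde{w}$ and any $\mu\in\lie{h}^*$ lying in the root (resp.\ weight) lattice, $\tilde{w}(\mu)$ lies in the same lattice since $W$ preserves it, and
\[
\tilde{w}\bigl(\mu-\theta(\mu)\bigr) = \tilde{w}(\mu) - \theta\bigl(\tilde{w}(\mu)\bigr)
\]
is again of the form $\mu'-\theta(\mu')$ with $\mu'$ in the appropriate lattice. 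Moreover $\mu-\theta(\mu)$ lies in $\ann(\lie{t})$, and because $\tilde{w}$ commutes with $\theta$ it preserves $\ann(\lie{t})$; under the identification $\lie{a}^*\simeq\ann(\lie{t})$ the action of $\tilde{w}$ on this vector coincides with that of $w$. This yields $w(\Lambda)\subseteq\Lambda$ and $w(\widehat{\Lambda})\subseteq\widehat{\Lambda}$ simultaneously.

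To produce the lift from a given $w\in W_{\lie{a}}$, I would start with any representative $k\in N_K(\lie{a}_0)$ and modify it within its $Z_K(\lie{a}_0)$-coset so that the result also normalizes $\lie{t}_0$, and hence normalizes $\lie{h}_0=\lie{t}_0\oplus\lie{a}_0$. First I would verify that $\lie{t}_0$ is maximal abelian in $Z_{\lie{k}_0}(\lie{a}_0)$: it is contained there, and any element of $Z_{\lie{k}_0}(\lie{a}_0)$ commuting with $\lie{t}_0$ commutes with all of $\lie{h}_0$ and hence lies in $\lie{h}_0\cap\lie{k}_0=\lie{t}_0$. Since $k\in N_K(\lie{a}_0)$ preserves $Z_{\lie{k}_0}(\lie{a}_0)$, the translate $k\cdot\lie{t}_0$ is another maximal abelian subspace there, so by conjugacy of maximal tori in the compact group $Z_K(\lie{a}_0)^0$ there exists $k'\in Z_K(\lie{a}_0)^0$ with $(k'k)\cdot\lie{t}_0=\lie{t}_0$. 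The product $k'k$ represents the same class as $k$ in $W_{\lie{a}}$ and now normalizes both $\lie{a}_0$ and $\lie{t}_0$; its image $\tilde{w}\in W$ preserves both eigenspaces of $\theta$ on $\lie{h}$ and therefore commutes with $\theta$ on $\lie{h}^*$.

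The main point of difficulty is this lifting step, since the generators $s_\beta$ of $W_{\lie{a}}$ are defined intrinsically on $\lie{a}^*$ and their compatibility with the $W$-action on $\lie{h}^*$ must be installed by hand. Once it is available, the $\Lambda$ and $\widehat{\Lambda}$ cases follow uniformly from the single fact that $W$ stabilizes both the root lattice and the weight lattice.
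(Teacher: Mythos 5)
Your proof is correct and follows essentially the same route as the paper: both arguments reduce the lemma to the fact that every element of $W_{\lie{a}}$ is induced by an element of $W$ commuting with $\theta$, which then manifestly preserves both lattices. The only difference is that the paper simply cites this lifting fact from Helgason (Theorem~VII.8.10), whereas you supply a direct proof of it via conjugacy of maximal abelian subalgebras in $Z_K(\lie{a}_0)^0$; that argument is sound.
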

\begin{proof}
The group $W_{\lie{a}}$ is a quotient of the centralizer of $\theta$ in $W$ (\cite[Theorem~VII.8.10]{helgason78}).
But this centralizer evidently stabilizes both $\Lambda$ and $\widehat{\Lambda}$.
\end{proof}

\begin{lem}
\label{lem:highest_in_support}
Suppose that $V(\omega)$ admits a non-zero $\lie{k}$-invariant.
Then $\omega\in\supp(\omega)$.
\end{lem}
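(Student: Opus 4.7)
The plan is to locate a particular weight $\mu$ in $\supp(\omega)$ and prove that the corresponding weight component $v^\omega_\mu$ is a $\lie{g}$-highest weight vector in $V(\omega)$; by irreducibility, this will force $\mu=\omega$ and so $v^\omega_\omega\neq 0$ as required.

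Concretely, I would choose $\mu\in\supp(\omega)$ maximal with respect to the usual partial order on $\lie{h}^*$ in which $\nu\geq\lambda$ means that $\nu-\lambda$ is a non-negative integer combination of the simple roots in $\Pi$; such a $\mu$ exists because $\supp(\omega)$ is non-empty by hypothesis and finite. It then suffices to show that $X_\alpha\cdot v^\omega_\mu=0$ for every $\alpha\in\Pi$ and every $X_\alpha\in\lie{g}_\alpha$, since this will make $v^\omega_\mu$ a nonzero highest weight vector, hence an element of the one-dimensional highest weight space of $V(\omega)$, giving $\mu=\omega$. The two cases of \pref{lem:helglem} naturally organize the verification.

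If $\theta(\alpha)=\alpha$, then $\lie{g}_\alpha\subseteq\lie{k}$, so $X_\alpha v^\omega=0$. The translates $X_\alpha v^\omega_\lambda$ live in distinct weight spaces as $\lambda$ varies, so each summand vanishes individually and in particular $X_\alpha v^\omega_\mu=0$. If instead $\theta(\alpha)\neq\alpha$, then $-\theta(\alpha)$ is also a positive root and the element $Z=X_\alpha+\theta(X_\alpha)$ is $\theta$-fixed and so lies in $\lie{k}$. Projecting the identity $Zv^\omega=0$ onto the weight-$(\mu+\alpha)$ component yields
$$X_\alpha v^\omega_\mu+\theta(X_\alpha)\,v^\omega_{\mu+\alpha-\theta(\alpha)}=0.$$
Here $\alpha-\theta(\alpha)$ is the sum of two positive roots, hence a nonzero non-negative integer combination of simple roots, so $\mu+\alpha-\theta(\alpha)$ strictly dominates $\mu$ in the chosen order. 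Maximality of $\mu$ then forces $v^\omega_{\mu+\alpha-\theta(\alpha)}=0$, whence $X_\alpha v^\omega_\mu=0$ in this case as well.

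I do not anticipate any significant obstacle. The only points requiring a moment's care are the linear independence of distinct weight spaces (used to extract the individual weight components from $X_\alpha v^\omega=0$ and from the projection of $Zv^\omega=0$) and the strict inequality $\mu+\alpha-\theta(\alpha)>\mu$, which rests precisely on the guarantee in \pref{lem:helglem} that $-\theta(\alpha)$ is a positive root whenever $\theta(\alpha)\neq\alpha$.
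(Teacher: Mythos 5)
Your proof is correct and follows essentially the same route as the paper's: pick a weight maximal in the support of $v^{\omega}$ and exploit the $\lie{k}$-invariance through the element $X_{\alpha}+\theta(X_{\alpha})$, splitting into the two cases of \pref{lem:helglem}. The only cosmetic differences are that you argue directly (verifying that $v^{\omega}_{\mu}$ is a highest weight vector using only simple roots and the standard order on $\lie{h}^*$), whereas the paper argues by contradiction using the restricted order $\preceq$ and the existence of a positive root vector not annihilating a non-highest weight vector.
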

\begin{proof}
Let $\lambda$ be any element of $\supp(\omega)$, maximal with respect to $\preceq$.
If $\lambda=\omega$ then we are finished; otherwise, since $\lambda$ is not the highest weight of $V(\omega)$, we can find some positive root $\alpha$ such that $X_{\alpha}v^{\omega}_{\lambda}\neq0$ (where $X_{\alpha}$ is some non-zero element of the root space $\lie{g}_{\alpha}$).
If $\alpha$ is fixed by $\theta$, then \pref{lem:helglem} gives $X_{\alpha}
    \in\lie{k}$, contradicting the $\lie{k}$-invariance of $v^{\omega}$.
Consequently $\theta(\alpha)\neq\alpha$.
But $v^{\omega}$ must be annihilated by $X_{\alpha} + \theta(X_{\alpha})
    \in\lie{k}$, so we have $$\theta(X_{\alpha})(v^{\omega}_{\lambda+
    \alpha-\theta(\alpha)}) = -X_{\alpha}v^{\omega}_{\lambda},$$ forcing
    $\lambda+\alpha-\theta(\alpha)\in\supp(\omega)$.
Again by \pref{lem:helglem}, $-\theta(\alpha)$ is also a positive root, so $\lambda\prec\lambda+\alpha-\theta(\alpha)$, contradicting the maximality of $\lambda$.
\end{proof}

\begin{lem}
Suppose that $V(\omega)$ admits a non-zero $\lie{k}$-invariant.
Then $\omega\in\widehat{\Lambda}$.
\end{lem}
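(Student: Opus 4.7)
I would first extract the content from $\lie{t}$-invariance. Since $\lie{t} \subseteq \lie{k}$ and $\lie{t}$ acts on each weight space $V(\omega)_{\lambda}$ by the scalar $\lambda|_{\lie{t}}$, the $\lie{t}$-invariance of $v^{\omega} = \sum v^{\omega}_{\lambda}$ forces every $\lambda \in \supp(\omega)$ to lie in $\ann(\lie{t})$; combined with \pref{lem:highest_in_support}, which places $\omega$ in $\supp(\omega)$, this gives $\omega \in \ann(\lie{t})$ and hence $\theta(\omega) = -\omega$ (since $\theta$ acts as $-1$ on $\ann(\lie{t})$). The immediate consequence is $(1-\theta)(\omega) = 2\omega \in \widehat{\Lambda}$ trivially; the real content of the lemma is improving the factor of two.

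To set up this improvement, write any candidate $\mu \in P$ as $\mu = \mu_{\lie{a}} + \mu_{\lie{t}}$ along the $\theta$-eigen-decomposition $\lie{h}^{*} = \ann(\lie{t}) \oplus \ann(\lie{a})$; then $(1-\theta)(\mu) = 2\mu_{\lie{a}}$, so producing $\mu \in P$ with $\mu - \theta(\mu) = \omega$ amounts to producing an algebraically integral weight whose projection to $\ann(\lie{t})$ equals $\omega/2$. Equivalently, I must show $\omega/2$ lies in the image of the restriction map $P \to \lie{a}^{*}$.

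My approach is to locate such a $\mu$ using the $\lie{k}$-invariance (beyond the $\lie{t}$-invariance already exploited). The most natural source is the set of weights of $V(\omega)$ itself, all of which are algebraically integral. Starting from $\omega$ and subtracting simple roots, the $\ann(\lie{t})$-projection changes by $-\alpha|_{\lie{a}}$: zero for ``compact'' simple roots (those with $\theta(\alpha) = \alpha$) and a simple restricted root for the others. The key ingredient, going beyond \pref{lem:highest_in_support}, is the $W_{\lie{a}}$-stability of $\supp(\omega)$ — elements of $K$ representing $W_{\lie{a}}$ fix $v^{\omega}$ and permute the $v^{\omega}_{\lambda}$ along the $W_{\lie{a}}$-action on $\lie{a}^{*}$ — combined with the shift argument $\lambda \mapsto \lambda + \alpha - \theta(\alpha)$ from that lemma. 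Together these rigidly constrain the placement of weights of $V(\omega)$ with half-weight projection, so one expects to pin down an interior weight whose $\ann(\lie{t})$-projection equals $\omega/2$.

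The principal obstacle is that no weight of $V(\omega)$ need have projection exactly $\omega/2$; for instance, in $\SU(2) \times \SU(2)/\Delta\SU(2)$ with odd highest weight, no such weight exists. In this situation $\mu$ must be sought outside $V(\omega)$: one lifts $\omega/2$ from the image of restriction back to $P$ using the surjectivity of $P \to \lie{a}^{*}$ modulo its kernel, with the essential input being that sphericalness of $V(\omega)$ — a Cartan--Helgason-type condition — forces $\omega/2$ into that image in the first place. That last step, reconciling the integrality of $\omega/2$ in $\lie{a}^{*}$ with algebraic integrality for the full root system $\Delta$, is where the $\lie{k}$-invariance (rather than just the $\lie{t}$-invariance already used) truly enters the proof.
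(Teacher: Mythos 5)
Your opening reduction is correct and consistent with the paper's setup: $\lie{t}$-invariance together with \pref{lem:highest_in_support} gives $\omega\in\ann(\lie{t})$, hence $\theta(\omega)=-\omega$ and $2\omega=(1-\theta)(\omega)\in\widehat{\Lambda}$, and the whole content of the lemma is indeed to remove the factor of two, i.e.\ to produce an algebraically integral $\mu$ with $\mu-\theta(\mu)=\omega$. But your proof stops exactly there. Your primary strategy --- locating such a $\mu$ among the weights of $V(\omega)$ --- fails, as you yourself observe with the $\SU(2)\times\SU(2)/\Delta\SU(2)$ example, and your fallback is the assertion that ``sphericalness of $V(\omega)$ --- a Cartan--Helgason-type condition --- forces $\omega/2$ into that image.'' That assertion \emph{is} the lemma; no argument is offered for it, and invoking the Cartan--Helgason theorem as a black box would be circular in this paper, whose \pref{sec:support} is precisely an elementary, self-contained derivation of that circle of results. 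So the essential step is missing.

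For comparison, the paper closes this gap by using group-level rather than infinitesimal invariance: after reducing to $G$ simply connected, it considers the finite $2$-torsion subgroup $F_0=\{a\in A\,|\,a^2=e\}$, shows $F_0\subseteq K_0$, and concludes that $F_0$ acts trivially on the nonzero highest-weight component $v^{\omega}_{\omega}$. Writing $F_0=\exp(S)$, where $S$ is the set of $a\in\lie{a}$ on which every algebraically integral weight takes values in $\pi i\ints$, and evaluating $\omega$ on the elements $2\pi i b_j\in S$ for $\{b_j\}$ a basis of $\lie{a}$ dual to a lattice basis of $\widehat{\Lambda}$, one reads off that the coordinates of $\omega$ in that lattice basis are integers. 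This is precisely the missing factor of two: it comes from torsion elements of $A\cap K_0$, an input that is invisible to the $\lie{k}$-invariance and weight-multiset considerations your proposal works with.
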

\begin{proof}
It suffices to consider the case in which $G$ is simply connected.
Consider the set $$F_0=\{a\in A\,|\,a^2=e\}$$ where $e$ denotes the identity element of $G$.
This is a subset of $K^{\theta}$.
Indeed, let $S$ denote the set of all $a\in\lie{a}$ such that $\mu(a)\in\pi i\ints$ for every algebraically integral weight $\mu$; then $F_0=\exp(S)$, so $F_0$ is actually a subset of $K_0$.
Then, since $\omega\in\supp(\omega)$, it follows that $F_0$ acts trivially on $v^{\omega}_{\omega}$.
Thus, for any $a\in S$ (i.e.\ any $a$ such that $\mu(a)\in2\pi i\ints$ for every  $\mu\in\widehat{\Lambda}$) we must also have $\omega(a)\in2\pi i\ints$.

Next, choose a generating set $\{\mu_1,\dots,\mu_k\}$ for $\widehat{\Lambda}$ such that $\{\mu_1,\dots,\mu_k\}$ is a basis for $\ann(\lie{t})$.
(This is possible by \pref{lem:lattice}.)
Then $\omega\in\supp(\omega)$ forces $\omega\in\ann(\lie{t})$, so we can write $$\omega=c_1\mu_1+\dots+c_k\mu_k$$ for some scalars $c_1,\dots,c_k$.
Let $b_1,\dots,b_k$ be the basis for $\lie{a}$ dual to $\mu_1,\dots,\mu_k$, and put $a_i=2\pi i b_i$, so that $a_1,\dots,a_k\in S$.
Then for each $i$, we have $\omega(a_i) = 2\pi ic_i\in2\pi i\ints$, showing that each $c_i$ is an integer.
\end{proof}

\begin{lem}
\label{lem:lhat_to_l}
Suppose that $\lambda\in\widehat{\Lambda}$, and $w\in W_{\lie{a}}$.
Then $\lambda-w(\lambda)\in\Lambda.$
\end{lem}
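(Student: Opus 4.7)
The plan is to reduce the statement to a standard fact about the full Weyl group $W$ by lifting the given $w \in W_{\lie{a}}$. By the same result from Helgason cited in the proof of \pref{lem:stable}, namely that $W_{\lie{a}}$ is realized as a quotient of the centralizer of $\theta$ in $W$, I would fix a lift $\tilde{w} \in W$ of $w$ satisfying $\tilde{w}\theta = \theta\tilde{w}$.

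Next I would write $\lambda = \mu - \theta(\mu)$ for some algebraically integral weight $\mu$, and use the commutation relation to compute
\[
w(\lambda) = \tilde{w}(\mu) - \tilde{w}(\theta(\mu)) = \tilde{w}(\mu) - \theta(\tilde{w}(\mu)).
\]
Setting $\rho = \mu - \tilde{w}(\mu)$, this yields
\[
\lambda - w(\lambda) = \rho - \theta(\rho),
\]
so the problem reduces to showing that $\rho$ lies in the root lattice; once this is established, $\rho - \theta(\rho) \in \Lambda$ is immediate from the definition of $\Lambda$.

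For this final reduction I would invoke the standard fact that for any element of the full Weyl group and any algebraically integral weight $\mu$, the difference $\mu - \tilde{w}(\mu)$ lies in the root lattice. This follows by writing $\tilde{w}$ as a product of simple reflections $s_{\alpha_1}\cdots s_{\alpha_k}$, telescoping, and using that at each stage $\mu' - s_{\alpha_i}(\mu') = \comps{\mu', \alpha_i}\alpha_i$ is an integer multiple of a simple root (since $W$ stabilizes the weight lattice, so the intermediate weights $\mu'$ remain algebraically integral) and that $W$ stabilizes the root lattice.

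I do not anticipate any genuine obstacle: once the $\theta$-commuting lift $\tilde{w}$ is in hand, the whole argument consists of moving $\theta$ past $\tilde{w}$ and invoking standard root-system facts. The only nontrivial input is the existence of the lift, which has already been cited in the proof of \pref{lem:stable}.
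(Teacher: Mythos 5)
Your proof is correct, but it takes a genuinely different route from the paper's. The paper reduces to the case where $w$ is a single restricted reflection $s_{\alpha|_{\lie{a}}}$ and then splits into two cases according to whether $\alpha-\theta(\alpha)$ is proportional to a root, computing $\lambda-w(\lambda)$ explicitly in each case and invoking \cite[Lemma~9.4]{hum72} to obtain $(\alpha,\theta(\alpha))=0$ in the non-proportional case. You instead lift $w$ to a $\theta$-commuting element $\tilde{w}$ of the full Weyl group --- the same consequence of \cite[Theorem~VII.8.10]{helgason78} that the paper already uses in \pref{lem:stable} --- push $\theta$ past $\tilde{w}$ to get $\lambda-w(\lambda)=\rho-\theta(\rho)$ with $\rho=\mu-\tilde{w}(\mu)$, and finish with the classical fact that $\mu-\tilde{w}(\mu)$ lies in the root lattice for any algebraically integral $\mu$, which your telescoping argument establishes correctly. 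What your approach buys is the elimination of both the reduction to generators and the case analysis (in particular, no appeal to the orthogonality lemma is needed); what it demands is the slightly sharper reading of Helgason's theorem, namely that the quotient map from the centralizer of $\theta$ in $W$ onto $W_{\lie{a}}$ is realized by restriction to $\lie{a}\simeq\ann(\lie{t})$, so that $w(\lambda)=\tilde{w}(\lambda)$ there --- but this is exactly how the paper itself uses that theorem, so nothing new is being assumed. Both arguments are sound.
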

\begin{proof}
Choose an algebraically integral weight $\mu$ with $\lambda = \mu-\theta(\mu)$.
Since $\Lambda\subseteq\widehat{\Lambda}$ and $W_{\lie{a}}$ is generated by restricted root reflections, it suffices to prove the result in the case where $w=s_{\alpha|_{\lie{a}}}$ for some root $\alpha$ with $\theta(\alpha)\neq\alpha$.

There are two cases to consider.
Suppose first that $\alpha-\theta(\alpha)$ is proportional to some root $\beta$.
Then
\begin{align*}
\lambda-w(\lambda) &= \comps{\lambda,\beta}\beta \\
&= 2\comps{\mu,\beta}\beta \\
&= \comps{\mu,\beta}(\beta-\theta(\beta))
\end{align*}
and, since $\mu$ is algebraically integral, this lies in $\Lambda$.

On the other hand, suppose that $\alpha-\theta(\alpha)$ is not proportional to any root.
Then certainly $\theta(\alpha)$ is not proportional to $\alpha$.
But since exactly one of $\{\alpha,\theta(\alpha)\}$ is positive, we see that neither $\alpha-\theta(\alpha)$ nor $\alpha+\theta(\alpha)$ is a root, so \cite[Lemma~9.4]{hum72} forces $(\alpha,\theta(\alpha))=0.$
Then
\begin{align*}
\lambda-w(\lambda) &= 2\frac{(\lambda, \alpha-\theta(\alpha))}{(\alpha-\theta(\alpha),\alpha-\theta(\alpha))}(\alpha-\theta(\alpha)) \\
&= \frac{(\lambda,\alpha-\theta(\alpha))}{(\alpha,\alpha)}(\alpha-\theta(\alpha)) \\
&= 2\frac{(\lambda,\alpha)}{(\alpha,\alpha)}(\alpha-\theta(\alpha))
\end{align*}
and, since $\lambda$ itself is algebraically integral, this again belongs to $\Lambda$.
\end{proof}

At last we have

\begin{thm}
\label{thm:support}
Suppose that $V(\omega)$ admits a non-zero $\lie{k}$-invariant, and let
    $\lambda$ be any weight of $V(\omega)$.
Then $\lambda\in\supp(\omega)$ if and only if $\theta(\lambda)=-\lambda$
    and $\omega-\lambda$ lies in the lattice $\Lambda$.
\end{thm}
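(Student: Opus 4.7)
The plan is to prove the two implications separately, with the reverse direction handled by induction on $\preceq$-height.

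For the forward direction, suppose $\lambda\in\supp(\omega)$. The relation $\theta(\lambda)=-\lambda$ amounts to $\lambda\in\ann(\lie{t})$, and follows because $\lie{t}_0$, consisting of $\theta$-fixed points in $\lie{h}_0$, lies in $\lie{k}_0$; so $\lie{t}$ annihilates the $\lie{k}$-invariant $v^{\omega}$, while $\lie{t}\subseteq\lie{h}$ acts on $v^{\omega}_{\mu}$ by the scalar $\mu|_{\lie{t}}$, forcing every surviving weight $\mu$ to satisfy $\mu|_{\lie{t}}=0$. For $\omega-\lambda\in\Lambda$, I iterate the construction from the proof of \pref{lem:highest_in_support}: given $\lambda\neq\omega$ in $\supp(\omega)$, one produces a positive root $\alpha$ with $\theta(\alpha)\neq\alpha$ and $\lambda+\alpha-\theta(\alpha)\in\supp(\omega)$, adding an element $\alpha-\theta(\alpha)\in\Lambda$ at each step. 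Since $\omega$ is the unique $\preceq$-maximum of $\supp(\omega)$ by \pref{lem:highest_in_support}, this process terminates at $\omega$, and $\omega-\lambda\in\Lambda$ as a sum of such steps.

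For the reverse direction, let $\lambda$ be a weight of $V(\omega)$ satisfying $\theta(\lambda)=-\lambda$ and $\omega-\lambda\in\Lambda$. The support is $W_{\lie{a}}$-invariant, since any element of $K$ normalizing $\lie{a}_0$ permutes weight spaces while preserving $v^{\omega}$, and by \pref{lem:stable} and \pref{lem:lhat_to_l} both hypotheses on $\lambda$ are also $W_{\lie{a}}$-invariant; so I may assume $\lambda$ is $W_{\lie{a}}$-dominant. Using the computation $\rho-\theta(\rho)=2\rho|_{\lie{a}}$, one sees that $\Lambda$ is contained in the integer span of $\{2\beta\mid\beta\in\Pi'\}$, and combining this with the fact that $\lambda$ is a weight of $V(\omega)$ forces $\omega-\lambda$ to be a non-negative integer combination of the $2\beta$. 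I induct on the resulting height.

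The base case $\lambda=\omega$ is \pref{lem:highest_in_support}. For the inductive step, pick $\beta\in\Pi'$ with positive coefficient in $\omega-\lambda$, lift it to a positive root $\alpha\in\Delta$ with $\alpha|_{\lie{a}}=\beta$ (so $\theta(\alpha)\neq\alpha$), and set $\lambda'=\lambda+(\alpha-\theta(\alpha))$. Provided $\lambda'$ is a weight of $V(\omega)$, its $W_{\lie{a}}$-dominant representative has strictly smaller height, so $\lambda'\in\supp(\omega)$ by induction. Then the identity $X_{\alpha}v^{\omega}_{\lambda}+\theta(X_{\alpha})v^{\omega}_{\lambda'}=0$ (extracted from the weight-$(\lambda+\alpha)$ component of $(X_{\alpha}+\theta(X_{\alpha}))v^{\omega}=0$) transfers nonvanishing from $\lambda'$ back to $\lambda$.

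The main obstacle is making this final transfer nontrivial: we need $\theta(X_{\alpha})v^{\omega}_{\lambda'}\neq 0$ (and $\lambda'$ a genuine weight of $V(\omega)$) for at least one lift $\alpha$ of $\beta$. I expect these potentially degenerate possibilities to be handled by varying $\alpha$ among lifts of $\beta$, by running the analogous argument with $(X_{-\alpha}+\theta(X_{-\alpha}))v^{\omega}=0$, or, as a safety net, by a cardinality argument: the forward direction and $W_{\lie{a}}$-invariance already bound $|\supp(\omega)|$ above by the candidate set of the theorem, so it suffices to exhibit enough distinct supported weights to saturate the bound.
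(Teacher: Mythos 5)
Your forward direction is essentially the paper's argument and is fine. Your reverse direction also has the right skeleton (reduce to $W_{\lie{a}}$-dominant $\lambda$ using \pref{lem:stable} and \pref{lem:lhat_to_l}, induct on the $\preceq$-distance from $\omega$ with \pref{lem:highest_in_support} as base case, and transfer nonvanishing from $\lambda+\alpha-\theta(\alpha)$ down to $\lambda$ via the $\lie{k}$-invariance relation for $X_{\pm\alpha}+\theta(X_{\pm\alpha})$). But the step you flag as ``the main obstacle'' is not a technicality to be patched later --- it is the entire content of the theorem, and none of your three proposed remedies is carried out or obviously viable. In particular the ``cardinality safety net'' is circular: the forward direction gives only an upper bound on $\supp(\omega)$, and saturating it is exactly the statement to be proved.

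The paper closes precisely this gap with a short but essential positivity argument, and it does so using the \emph{lowering} relation $X_{-\alpha}v^{\omega}=-\theta(X_{-\alpha})v^{\omega}$ (one of your suggested variants). Because $\theta$ is an isometry for $(\cdot,\cdot)$ and $\theta(\alpha)\neq\alpha$, Cauchy--Schwarz gives $(\theta(\alpha),\alpha)<\abs{\alpha}^2$, whence
$(\lambda+\alpha-\theta(\alpha),\alpha)=(\lambda,\alpha)+\abs{\alpha}^2-(\theta(\alpha),\alpha)>(\lambda,\alpha)\geq 0$,
the last inequality using the dominance of $\lambda$ that your reduction already secured. By elementary $\lie{sl}_2$-theory, a nonzero weight vector whose weight pairs strictly positively with $\alpha$ cannot be killed by $X_{-\alpha}$, so $X_{-\alpha}v^{\omega}_{\lambda+\alpha-\theta(\alpha)}\neq 0$; extracting the weight-$(\lambda-\theta(\alpha))$ component of the invariance relation then forces $v^{\omega}_{\lambda}\neq 0$. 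Note that this arrangement puts the operator whose nonvanishing must be certified on the side of the \emph{already known} supported weight $\lambda+\alpha-\theta(\alpha)$, rather than on the unknown $v^{\omega}_{\lambda}$ as in your raising-operator version, which is why the estimate suffices. (A second, smaller omission: you should also justify that $\lambda+\alpha-\theta(\alpha)$ and its dominant conjugate are weights of $V(\omega)$ before invoking the inductive hypothesis; the paper handles the bookkeeping by taking $\lambda'$ to be the dominant $W_{\lie{a}}$-conjugate and checking via \pref{lem:preceq}, \pref{lem:stable}, and \pref{lem:lhat_to_l} that it again satisfies both hypotheses.) Without the Cauchy--Schwarz step, your proof does not go through.
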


\begin{proof}
First suppose that $\lambda\in\supp(\omega)$.
Since $\lie{t}\subseteq \lie{k}$ and $v^{\omega}$ is $\lie{k}$-invariant, we have for any $t\in\lie{t}$ $$0 = tv^{\omega} = \sum_{\mu}\mu(t)v^{\omega}_{\mu}$$ from which it follows that $\lambda(t)=0$.
Then, for any $x\in\lie{h}$, write $x = x_+ + x_-$ with $x^+\in\lie{t}$
    and $x_-\in\lie{a}$.
We have
\begin{align*}
(\theta(\lambda))(x) &= \lambda(\theta(x_+ + x_-)) \\
&= \lambda(x_+-x_-) \\
&= \lambda(x_-) \\
&= \lambda(-x_+-x_-) \\
&= (-\lambda)(x)
\end{align*}
so that $\theta(\lambda)=-\lambda.$
The proof that $\omega-\lambda$ belongs to the lattice $\Lambda$ is by downward induction with respect to the partial order $\preceq$.
In the base case, $\omega-\lambda=0$ certainly belongs to $\Lambda$.
Otherwise $\lambda$ is not a highest weight of $V(\omega)$, so by the argument given in the proof of \pref{lem:highest_in_support}, there is some positive root $\alpha$ with $\lambda+\alpha-\theta(\alpha)\in\supp(\omega)$, and the result follows immediately by induction.

On the other hand, suppose that $\theta(\lambda)=-\lambda$ and $\omega-\lambda$
    belongs to $\Lambda$.
We shall prove that $\lambda\in\supp(\omega)$, again by downward induction with respect to $\preceq$.
The base case is \pref{lem:highest_in_support}.
For the general case, recall that $W_{\lie{a}}$ denotes the Weyl group of the pair $(G,K)$; in particular, each of its elements is a coset of an element of $K$.
It follows that $W_{\lie{a}}$ stabilizes the set
    $\{\mu|_{\lie{a}}\}_{\mu\in\supp{\omega}}$, so
    we may assume without loss of generality that $\lambda|_{\lie{a}}$ is dominant.
But this implies that $\lambda$ itself is a dominant weight of $\lie{g}$.
To see this, begin by noting that $\lambda\in\ann(\lie{t})$ since
    $\theta(\lambda)=-\lambda$.
Then for any positive root $\alpha$ we can write $\alpha = \alpha_+ + \alpha_-$
    with $\alpha_+\in\ann(\lie{a})$ and $\alpha_-\in\ann(\lie{t})$, giving
    $$(\lambda,\alpha) = (\lambda, \alpha_-) = (\lambda|_{\lie{a}}, \alpha|_{\lie{a}}) \geq 0$$
    and proving that $\lambda$ is dominant.
Now write $$\omega-\lambda = \sum_{\alpha\in\Delta^+}
    k_{\alpha}(\alpha-\theta(\alpha))$$ with $k_{\alpha}\in\ints$.
Since $\omega$ is the \emph{highest} weight of $V(\omega)$ we may take all
    $k_{\alpha}$ non-negative.
Evidently we may also take $k_{\alpha}=0$ whenever $\alpha$ is $\theta$-
    fixed.
Now choose $\alpha\in\Delta^+$ with $k_{\alpha}>0$.
Let $\lambda'$ be the unique dominant $W_a$-conjugate of $\lambda+\alpha-\theta(\alpha)$.
By \pref{lem:preceq}, we have $\lambda\preceq\lambda'$.
Evidently $\theta(\lambda')=-\lambda'$.
Choose $w\in W_{\lie{a}}$ with $$w(\lambda+\alpha-\theta(\alpha)) = \lambda'.$$
We have
\begin{align*}
\omega-\lambda' &= \omega - w(\lambda-\omega+\omega+\alpha-\theta(\alpha))\\
&= \omega - w(\omega) + w(\omega-\lambda) - w(\alpha-\theta(\alpha))
\end{align*}
which, by Lemmas~\ref{lem:stable} and \ref{lem:lhat_to_l}, lies in $\Lambda$.
Then by induction, $\lambda'\in\supp(\omega)$, so also $\lambda+\alpha-\theta(\alpha)\in\supp(\omega).$

Since $\theta(\alpha)\neq\alpha$, it follows from the Cauchy-Schwartz inequality that $(\theta(\alpha),\alpha)<\abs{\alpha}^2$.
Hence
\begin{align*}
(\lambda + \alpha - \theta(\alpha), \alpha) &= (\lambda,\alpha) + \abs{\alpha}^2 - (\theta(\alpha),\alpha) \\
&> (\lambda,\alpha) \\
&\geq0.
\end{align*}
from which it follows that $$X_{-\alpha}v^{\omega}_{\lambda+\alpha-\theta(\alpha)} \neq 0.$$
Now since $v^{\omega}$ is $\lie{k}$-invariant, we must have $$X_{-\alpha}v^{\omega} = -\theta(X_{-\alpha})v^{\omega}.$$
Extracting the component of each side of weight $\lambda-\theta(\alpha)$ gives $$X_{-\alpha}v^{\omega}_{\lambda+\alpha-\theta(\alpha)} = -\theta(X_{-\alpha})v^{\omega}_{\lambda}$$ forcing $v^{\omega}_{\lambda}\neq0$ and hence $\lambda\in\supp(\omega)$.

\end{proof}


\section{Representative functions on the adjoint form}
\label{sec:adjoint}

In this section we prove \pref{cor:inclusion} characterizing the image
    of the injection $\repf(\ad(M))\rightarrow\repf(M)$.
We begin with a trivial modification of \cite{kr71}, Proposition~1:

\begin{lem}
Let $F$ be the group $\{a\in A|a^2\in Z\}$.
Then $K_Z=FK_0$.
\end{lem}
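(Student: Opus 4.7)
The plan is to prove both inclusions $FK_0 \subseteq K_Z$ and $K_Z \subseteq FK_0$. The first is immediate: $K_0 \subseteq K \subseteq K_Z$ by definition, and for $a \in F \subseteq A \subseteq \exp(\lie{p}_0)$ we have $\theta(a) = a^{-1}$, so $\theta(a)^{-1}a = a^2 \in Z$, giving $F \subseteq K_Z$.

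For the reverse inclusion, I would invoke the Cartan decomposition $G = KAK$ available for compact symmetric pairs and write an arbitrary $g \in K_Z$ as $g = \kappa_1 a \kappa_2$ with $\kappa_i \in K$ and $a \in A$. Since $\theta$ fixes $K$ pointwise and inverts $A$, applying $\theta$ yields $\theta(g) = \kappa_1 a^{-1} \kappa_2$. Comparing with $\theta(g) = g z^{-1}$, where $z = \theta(g)^{-1}g \in Z$ is central (hence commutes past $\kappa_2$), forces $a^2 = z$, so $a \in F$. Thus every element of $K_Z$ admits a $KAK$-factor in $F$.

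Having extracted a square root of $z$ in $F$, I would then reduce to the fixed-point case treated by Kostant--Rallis. Set $h = a^{-1}g$; a short calculation exploiting the centrality of $a^2 = z$ shows $\theta(h) = h$, i.e.\ $h \in K_{\theta}$. The original Kostant--Rallis Proposition~1 (our statement in the special case $Z = \{e\}$) asserts $K_{\theta} = F_{\theta} K_0$, where $F_{\theta} = \{a' \in A \mid (a')^2 = e\} \subseteq F$. Writing $h = a_0 k_0$ with $a_0 \in F_{\theta}$ and $k_0 \in K_0$ gives $g = ah = (aa_0) k_0$; and since $A$ is abelian, $(aa_0)^2 = a^2 a_0^2 = z \cdot e = z \in Z$, so $aa_0 \in F$ and $g \in F K_0$.

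The main obstacle is that $FK_0$ is not manifestly a subgroup of $G$---conjugation by $K_0$ need not preserve $A$, let alone $F$---so one cannot simply close $FK_0$ under the group law to propagate the $Z = \{e\}$ result to general $Z$. The ``trivial modification'' phrase signals that Kostant--Rallis's proof generalizes by tracking an arbitrary central $z$ in place of $e$ throughout; the key structural input (that the $KAK$-decomposition of $g \in K_Z$ produces an $A$-factor whose square equals the central obstruction $z$) is identical in both cases, and the commutativity of $A$ together with the centrality of $z$ makes the final assembly $(aa_0)^2 \in Z$ automatic.
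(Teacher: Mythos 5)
Your proof is correct, and its engine is the same as the paper's: apply the Cartan decomposition to $g\in K_Z$ and observe that the centrality of $z=\theta(g)^{-1}g$ forces the $A$-factor to satisfy $a^2=z$. Where you diverge is in the assembly. The paper takes the decomposition in the form $G=K_0AK_0$, so that $g=k_1ak_2$ with $k_i\in K_0$ and $a\in F$ already exhibits $g$ as a product of elements of $F$ and $K_0$; the only remaining point is that $FK_0$ is a group, which holds because $F\subseteq K_Z$ and $K_0$ is the identity component of $K_Z$, hence normal in it. (This also disposes of the obstacle you raise at the end: one does not need conjugation by $K_0$ to preserve $A$ or $F$, only that $F$ normalize $K_0$.) You instead take the middle factors $\kappa_i$ in the full group $K$, which is why you then need the detour through $h=a^{-1}g\in K_{\theta}$ and the black-box citation of Kostant--Rallis for $K_{\theta}=F_{\theta}K_0$. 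That reduction is valid --- the computations $\theta(h)=h$ (using centrality of $a^2=z$) and $(aa_0)^2=a^2a_0^2=z$ both check out, and $F_{\theta}\subseteq F$ since $e\in Z$ --- and it has the virtue of literally deriving the ``trivial modification'' from the original proposition. But the imported statement $K_{\theta}=F_{\theta}K_0$ is itself proved by exactly the $K_0AK_0$ argument you already ran, so the paper's route is shorter and self-contained; your version is a correct but slightly roundabout variant of the same idea.
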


\begin{proof}
Suppose first that $a\in F$.
We have $\theta(a)^{-1}a=a^2\in Z$, so $F\subseteq K_Z$.
Now $K_0$ is also the connected component of $K_Z$, so $F$ normalizes it
    and hence $FK_0$ is a group with $FK_0 \subseteq K_Z$.
On the other hand, suppose $g\in K_Z$.
Using the $K_0AK_0$ decomposition for $G$ (\cite[Theorem~V.6.7]{helgason78}),
    we can write $g=k_1ak_2$ for $k_i\in K_0$ and $a\in A$.
Then $z=\theta(g)^{-1}g=k_2^{-1}ak_1^{-1}k_1ak_2=k_2^{-1}a^2k_2$
    lies in the center of $G$.
But then $a^2=k_2zk_2^{-1}=z$, showing that $a\in F$.
Thus $g=k_1ak_2$ lies in $FK_0$.
\end{proof}

\begin{lem}
The group $F$ coincides with the set $\{\exp(a)\,|\,a\in\lie{a}\text{ and }%
    (\alpha-\theta(\alpha))(a)\in2\pi i\ints,\,\forall\alpha\in\Pi\}$.
\end{lem}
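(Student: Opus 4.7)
The plan is to observe that this is essentially a direct calculation: every element of the compact torus $A$ is of the form $\exp(a)$ with $a\in\lie{a}_0$, so the content of the lemma is to rewrite the condition $\exp(a)^2\in Z$ in terms of the action of roots on $a$. There is no deep obstacle; the main points are to use the standard description of $Z$ as $\ker\Ad$ and the fact that $\theta$ acts as $-\mathrm{id}$ on $\lie{a}_0$.

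First I would note that since $A=\exp(\lie{a}_0)$ is a compact torus, every element of $A$ is of the form $\exp(a)$ for some $a\in\lie{a}_0$, so it suffices to show that $\exp(a)^2=\exp(2a)\in Z$ if and only if $(\alpha-\theta(\alpha))(a)\in 2\pi i\ints$ for every $\alpha\in\Pi$.

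For the reduction to root conditions, I would use that $G$ is connected, so $Z=\ker\Ad$; thus $\exp(2a)\in Z$ is equivalent to $\Ad(\exp(2a))=\exp(\ad(2a))$ acting trivially on $\lie{g}$. Since $\ad(2a)$ acts on each root space $\lie{g}_\beta$ by the scalar $2\beta(a)$, this is equivalent to $e^{2\beta(a)}=1$, i.e.\ $2\beta(a)\in 2\pi i\ints$, for every $\beta\in\Delta$. Because $\Pi$ is a $\ints$-basis for the root lattice and $\beta\mapsto 2\beta(a)$ is $\ints$-linear, it suffices to impose this condition for $\beta\in\Pi$.

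Finally I would match this with the stated condition by recalling that $\lie{a}_0\subseteq\lie{p}_0$, so $\theta$ acts as $-\mathrm{id}$ on $\lie{a}_0$, whence for any $a\in\lie{a}_0$ and any root $\beta$,
\begin{equation*}
\theta(\beta)(a)=\beta(\theta^{-1}(a))=\beta(-a)=-\beta(a),
\end{equation*}
so $(\beta-\theta(\beta))(a)=2\beta(a)$. Substituting this into the previous paragraph's condition gives exactly $(\alpha-\theta(\alpha))(a)\in 2\pi i\ints$ for every $\alpha\in\Pi$, completing the proof.
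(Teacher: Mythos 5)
Your proof is correct and follows essentially the same route as the paper's: the paper likewise identifies $F$ with $\{\exp(a)\mid \alpha(2a)\in2\pi i\ints\ \forall\alpha\in\Pi\}$ (implicitly via $Z=\ker\Ad$ and reduction to simple roots) and then uses $(\theta(\alpha))(a)=-\alpha(a)$ on $\lie{a}$. You have simply written out the details the paper dismisses with ``evidently.''
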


\begin{proof}
Evidently $F$ coincides with $\{\exp(a)\,|\,a\in\lie{a}\text{ and }%
    \alpha(2a)\in2\pi i\ints,\,\forall \alpha\in\Pi\}$.
But $(\theta(\alpha))(a)=-\alpha(a)$.
\end{proof}

This leads immediately to

\begin{cor}
\label{cor:gens}
Let $x_1,\dots,x_{\dim(\lie{a})}$ be the basis for $\lie{a}$ dual to $\Pi'$
    and put $S=\{\exp(\pi ix_1),\dots,\exp(\pi i x_{\dim(\lie{a})})\}$.
Then $F$ is generated by $S$.
\end{cor}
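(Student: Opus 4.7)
The plan is to combine the two preceding lemmas and translate the defining condition for $F$ into an integer condition in the dual basis to $\Pi'$. The upshot should be that $F = \exp(L)$, where $L$ is the $\ints$-lattice in $\lie{a}_0$ spanned by $\pi i x_1,\dots,\pi i x_k$; the corollary then follows by applying the exponential.

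First I would simplify the condition from the preceding lemma. Since $\theta$ acts as $-1$ on $\lie{a}$, for any $a\in\lie{a}_0$ and any $\alpha\in\Pi$ we have $(\alpha-\theta(\alpha))(a) = 2\alpha(a) = 2(\alpha|_{\lie{a}})(a)$, so the condition becomes $(\alpha|_{\lie{a}})(a)\in\pi i\ints$ for every $\alpha\in\Pi$. I would then observe that this is equivalent to $\beta(a)\in\pi i\ints$ for every $\beta\in\Pi'$: the nonzero restrictions $\{\alpha|_{\lie{a}}:\alpha\in\Pi\}$ generate the same $\ints$-lattice in $\lie{a}^*$ as $\Pi'$ does, namely the restricted root lattice $\ints\Sigma$, because every restricted root is the restriction of a root and every root is an integer combination of simple ones.

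Next I would carry out the linear-algebra step. Write $\Pi'=\{\beta_1,\dots,\beta_k\}$, so the dual basis $\{x_1,\dots,x_k\}\subseteq\lie{a}$ satisfies $\beta_j(x_i)=\delta_{ij}$. I would first check that each $\pi i x_j$ lies in $\lie{a}_0$: roots take purely imaginary values on $\lie{h}_0$ (hence on $\lie{a}_0$), so the real-valued pairing $\beta_j(x_i)=\delta_{ij}$ forces $x_i\in i\lie{a}_0$, giving $\pi i x_i\in\lie{a}_0$ and $\exp(\pi i x_i)\in A$. Expanding a general $a\in\lie{a}_0$ as $a=\sum_\ell c_\ell(\pi i x_\ell)$ with $c_\ell\in\real$, we get $\beta_j(a)=\pi i c_j$, so the condition $\beta_j(a)\in\pi i\ints$ amounts to $c_j\in\ints$. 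The admissible $a$'s are therefore exactly the $\ints$-span of $\pi i x_1,\dots,\pi i x_k$, and applying $\exp$ shows that $F$ is generated by $S$.

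No step is especially hard; the main care lies in correctly interchanging the conditions indexed by $\Pi$ and those indexed by $\Pi'$, and in tracking the reality structure so that the claimed generators really live in the compact torus $A$.
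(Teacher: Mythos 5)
Your proof is correct and follows the same route the paper intends: the paper gives no written proof (the corollary is introduced with ``This leads immediately to''), and what you supply is exactly the missing computation --- rewrite the condition $(\alpha-\theta(\alpha))(a)\in2\pi i\ints$ as $\alpha(a)\in\pi i\ints$ using $\theta=-1$ on $\lie{a}$, pass from $\Pi$ to $\Pi'$ via the equality of the lattices they generate in $\lie{a}^*$, and read off the answer in the dual basis. Your extra care about the reality structure (that $\pi i x_j$ lies in $\lie{a}_0$, so the generators really lie in the compact torus $A$) is a worthwhile detail the paper glosses over.
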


\begin{lem}
\label{lem:weightann}
Let $\lambda$ be an analytically integral weight annihilating $\lie{t}$.
Then $\lambda$ annihilates $F$ if and only if $\lambda$ belongs to the lattice
    $\Lambda$.
\end{lem}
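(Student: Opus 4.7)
The plan is to turn ``$\lambda$ annihilates $F$'' into an explicit coordinate condition via the generators from \pref{cor:gens}, and then to give $\Lambda$ a matching explicit description so that the two conditions visibly agree.

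First I would use \pref{cor:gens}, which gives $F$ as generated by $S = \{\exp(\pi i x_1), \ldots, \exp(\pi i x_{\dim\lie{a}})\}$ with $\{x_j\}$ the basis dual to $\Pi'$. Since $\lambda$ is analytically integral it integrates to a character, so $\lambda(\exp(\pi i x_j)) = \exp(\pi i \lambda(x_j))$; this equals $1$ iff $\lambda(x_j) \in 2\ints$. Because $\lambda$ annihilates $\lie{t}$, I may view it inside $\lie{a}^* = \ann(\lie{t})$, and writing $\lambda = \sum_j c_j \beta_j$ in the basis $\Pi' = \{\beta_j\}$ dual to $\{x_j\}$ gives $c_j = \lambda(x_j)$. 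Hence $\lambda$ annihilates $F$ if and only if $\lambda \in 2\ints\Pi'$ inside $\lie{a}^*$.

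Second, I would identify $\Lambda$ with $2\ints\Pi'$ under the same identification $\ann(\lie{t}) \simeq \lie{a}^*$. Since $\theta$ acts as $-1$ on $\lie{a}$, for any $\rho$ in the root lattice one has $(\rho-\theta(\rho))|_{\lie{a}} = 2\rho|_{\lie{a}}$, so $\Lambda$ corresponds to twice the restriction of the root lattice to $\lie{a}$. By the compatibility of the positive systems $\Delta^+$ and $\Sigma^+$ stipulated in \pref{sec:prelim}, the restriction map carries $\Pi$ onto $\Pi' \cup \{0\}$; consequently the $\ints$-span of $\Pi$ restricts to $\ints\Pi'$, giving $\Lambda = 2\ints\Pi'$. (For the inclusion $2\ints\Pi' \subseteq \Lambda$ one lifts each $\beta \in \Pi'$ to a simple root $\alpha$ with $\alpha|_{\lie{a}} = \beta$, yielding $2\beta = (\alpha - \theta(\alpha))|_{\lie{a}} \in \Lambda$.)

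Comparing the two descriptions completes the argument. The only non-trivial input is that $\Pi$ restricts onto $\Pi' \cup \{0\}$; this is the standard compatibility of the chosen orderings, already built into the setup. Everything else is bookkeeping with the dual basis $\{x_j\}$ and the exponential map.
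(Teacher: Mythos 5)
Your proof is correct and follows essentially the same route as the paper's: both reduce ``$\lambda$ annihilates $F$'' to the condition $\lambda(x_j)\in2\ints$ via \pref{cor:gens}, and both then match this against $\Lambda$ viewed inside $\ann(\lie{t})\simeq\lie{a}^*$ as $2\ints\Pi'$ (the paper does this by exhibiting the explicit element $\mu=\sum_i\tfrac{\lambda(x_i)}{2}(\alpha_i-\theta(\alpha_i))$ and noting that $\lambda$ and $\mu$ agree on both $\lie{t}$ and $\lie{a}$). The only cosmetic difference is that you invoke the stronger (but true and standard) fact that $\Pi$ restricts onto $\Pi'\cup\{0\}$, whereas the paper only needs that each element of $\Pi'$ is the restriction of some positive root and that every root restricts into $\ints\Pi'\cup\{0\}$.
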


\begin{proof}
Suppose first that $\lambda$ annihilates $F$.
In the notation of \pref{cor:gens}, we must have $\lambda(x_i)\in2\ints$.
Let $\{\alpha_i\}$ be a set of positive roots restricting to $\Pi'$.
Put $\mu = \sum_i(\frac{\lambda(x_i)}{2} (\alpha_i-\theta(\alpha_i))$.
Then $\lambda$ agrees with $\mu$ on both $\lie{t}$ and $\lie{a}$, so in
    fact $\lambda=\mu$.

On the other hand, suppose that $\lambda$ belongs to $\Lambda$.
It follows immediately that $\lambda(x_i)\in2\ints$ for all $i$, and hence
    that the corresponding weight of $G$ annihilates $F$.
\end{proof}

\begin{thm}
Let $V(\omega)$ be an irreducible representation of $G$ containing a non-zero
    $\lie{k}$-invariant.
Then $V(\omega)$ contains a non-zero $K_Z$-invariant if and only if $\omega%
    \in\Lambda$.
\end{thm}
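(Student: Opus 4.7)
The plan is to combine the decomposition $K_Z = FK_0$ from the first lemma of this section with the sphericalness of $K_0$ and the characterization of $\supp(\omega)$ from \pref{thm:support}. Since $K_0$ is spherical, the space of $K_0$-invariants in $V(\omega)$ is one-dimensional, spanned by the vector $v^{\omega}$. Since $K_Z=FK_0$, a non-zero $K_Z$-invariant exists precisely when $v^{\omega}$ itself is fixed by $F$. So the whole statement reduces to determining when $F$ stabilizes $v^{\omega}$.

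The next step is to translate $F$-invariance of $v^{\omega}$ into a statement about the weights appearing in its expansion. Because $F\subseteq A\subseteq H$ and each element of $F$ is semisimple, $F$ acts on every weight vector $v^{\omega}_{\lambda}$ of $V(\omega)$ by the character obtained by exponentiating $\lambda$ (and these characters are well-defined since every weight of $V(\omega)$ is analytically integral). Writing $v^{\omega}=\sum_{\lambda\in\supp(\omega)}v^{\omega}_{\lambda}$ and comparing weight components, $v^{\omega}$ is $F$-invariant if and only if every $\lambda\in\supp(\omega)$ annihilates $F$. By \pref{lem:weightann}, and using that $\supp(\omega)\subseteq\ann(\lie{t})$ (which is immediate from \pref{thm:support}), this is the same as requiring $\supp(\omega)\subseteq\Lambda$.

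Now I would close the argument using \pref{thm:support} in both directions. If $V(\omega)$ has a non-zero $K_Z$-invariant, then in particular $\omega\in\supp(\omega)$ by \pref{lem:highest_in_support} must lie in $\Lambda$. Conversely, if $\omega\in\Lambda$, then for every $\lambda\in\supp(\omega)$ the difference $\omega-\lambda$ lies in $\Lambda$ (by \pref{thm:support}), so $\lambda=\omega-(\omega-\lambda)\in\Lambda$ as well because $\Lambda$ is an additive subgroup; by \pref{lem:weightann} each such $\lambda$ annihilates $F$, and therefore $v^{\omega}$ is $F$-invariant, producing the desired $K_Z$-invariant.

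The main obstacle is the middle step: rigorously reducing $F$-invariance of $v^{\omega}$ to a per-weight condition. This requires knowing that $F\subseteq H$ so that every weight vector of $V(\omega)$ is simultaneously an $F$-eigenvector, and that the weights that arise exponentiate to genuine characters of $F$; both points follow from the analytic integrality of the weights of $V(\omega)$ and the description of $F$ in terms of $\exp(\lie{a})$. Once this bookkeeping is in place, the result falls out directly from \pref{thm:support} and \pref{lem:weightann}.
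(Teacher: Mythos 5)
Your argument is correct and follows essentially the same route as the paper's own proof: reduce via $K_Z=FK_0$ and sphericity of $K_0$ to the question of when $v^{\omega}$ is $F$-invariant, translate that into the condition that every weight in $\supp(\omega)$ annihilates $F$, and then conclude with \pref{lem:weightann} and \pref{thm:support}. You simply spell out more of the bookkeeping (that $F\subseteq H$, analytic integrality of the weights, and the subgroup structure of $\Lambda$) than the paper does.
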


\begin{proof}
Since both $K_Z$ and $K_0$ are spherical subgroups of $G$, we need only
    determine when $v^{\omega}$ is also $K_Z$-invariant.
This happens if and only if $v^{\omega}$ is $F$-invariant, which happens
    if and only if all the weights in $\supp(\omega)$ annihilate $F$.
By \pref{lem:weightann}, this happens if and only if every member of
    $\supp(\omega)$ belongs to $\Lambda$, and by \pref{thm:support}
    this happens if and only if $\omega$ itself belongs to $\Lambda$.
\end{proof}

\begin{cor}
\label{cor:inclusion}
The image of the injection $\repf(\ad(M))\rightarrow\repf(M)$ induced by the
    covering map consists precisely of those $V(\omega)$ contained in $\repf(M)$
    such that $\omega^*$, the highest weight of the dual of $V(\omega)$, belongs
    to $\Lambda$.
\end{cor}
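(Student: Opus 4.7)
The plan is to combine the Peter--Weyl style decomposition $\repf(G/H)\simeq\bigoplus_{V\in\Irr(G)}V\otimes(V^*)^H$ recorded in the preliminaries with the theorem established immediately before the corollary. Under the inclusion $\repf(\ad(M))=\repf(G/K_Z)\hookrightarrow\repf(G/K)=\repf(M)$ induced by the covering map, the image is the subsum indexed by those $V(\omega)$ for which $(V(\omega)^*)^{K_Z}\neq 0$. Thus the problem reduces to characterising, among those $V(\omega)$ that actually appear in $\repf(M)$, which have the property that $V(\omega)^*$ carries a nonzero $K_Z$-invariant vector.

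Suppose then that $V(\omega)\subseteq\repf(M)$, so that $(V(\omega)^*)^K\neq 0$. Because $K_0\subseteq K$, and because $(V^*)^{K_0}$ coincides with the space of $\lie{k}$-invariants in $V^*$, the irreducible module $V(\omega)^*$ (whose highest weight is $\omega^*$) admits a nonzero $\lie{k}$-invariant. This is exactly the hypothesis of the preceding theorem, applied to $V(\omega)^*$ in place of $V(\omega)$. Its conclusion then reads: $V(\omega)^*$ carries a nonzero $K_Z$-invariant if and only if $\omega^*\in\Lambda$. Conversely, if $(V(\omega)^*)^{K_Z}\neq 0$ then, since $K\subseteq K_Z$, also $(V(\omega)^*)^K\neq 0$, and $V(\omega)$ is automatically present in $\repf(M)$. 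Putting the two directions together yields the corollary.

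The only point that genuinely needs care -- and the one I would flag as the main (though modest) obstacle -- is the bookkeeping around the nested invariant subspaces $(V(\omega)^*)^{K_Z}\subseteq(V(\omega)^*)^{K}\subseteq(V(\omega)^*)^{\lie{k}}$. By sphericality of $K_0$ (and therefore of $K$ and $K_Z$) each of these spaces is at most one-dimensional, so once the smallest is nonzero all three coincide; this is what lets the criterion on $\omega^*$, which a priori only governs $K_Z$-invariance starting from $\lie{k}$-invariance, actually characterise the image of $\repf(\ad(M))$ inside $\repf(M)$.
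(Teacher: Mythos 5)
Your proposal is correct and follows essentially the same route as the paper's own (much terser) proof: both reduce the statement to the decomposition $\repf(G/H)\simeq\bigoplus_{V}V\otimes(V^*)^H$ and then apply the preceding theorem to $V(\omega)^*$. The extra care you take with the chain $(V(\omega)^*)^{K_Z}\subseteq(V(\omega)^*)^{K}\subseteq(V(\omega)^*)^{\lie{k}}$ is a reasonable elaboration of what the paper leaves implicit.
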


\begin{proof}
The ring $\repf(M)$ consists of those $V(\omega)$ such that $V(\omega)^*$ contains
    a non-zero $K$-invariant, and the ring $\repf(\ad M)$ consists of those
    $V(\omega)$ such that $V(\omega)^*$ contains a non-zero $K_Z$-invariant.
\end{proof}

\section{The main result}
\label{sec:main}

\begin{thm}
\label{thm:main}
The kernel of the torus transform $\tau$ is the orthogonal complement of the
    image of the embedding $L^2(\ad(M))\rightarrow L^2(M)$ induced by the
    covering map.
\end{thm}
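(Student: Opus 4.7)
The plan is to combine the Peter-Weyl decomposition of $L^2(M)$ with \pref{thm:reynolds}, \pref{thm:support}, and \pref{cor:inclusion}. Write $L^2(M)=\widehat{\bigoplus}_{\omega} V(\omega)\otimes(V(\omega)^*)^K$ as a Hilbert direct sum over those dominant analytically integral weights $\omega$ with $(V(\omega)^*)^K\neq 0$; sphericity of $K$ forces each second factor to be one-dimensional. The operator $\tau$ commutes with the left action of $G$, since on representative functions it equals $R_A$ by \pref{thm:reynolds} and right translation by $A$ commutes with left translation by $G$. Hence $\tau$ preserves each isotypic summand, and $\ker\tau$ is itself a Hilbert direct sum of certain such summands; the task reduces to identifying them.

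On the summand $V(\omega)\otimes(V(\omega)^*)^K$, the operator $\tau_0=R_A$ acts as the identity on the first factor tensored with the $A$-averaging projection $\pi_A\colon V(\omega)^*\to(V(\omega)^*)^A$ on the second. Since $(V(\omega)^*)^K$ is one-dimensional, spanned by the $\lie{k}$-invariant $v^{\omega^*}\in V(\omega)^*$ in the notation of \pref{sec:support} applied to the dual representation, the summand is contained in $\ker\tau$ when $\pi_A(v^{\omega^*})=0$ and intersects $\ker\tau$ trivially otherwise. A weight vector in $V(\omega)^*$ is $A$-invariant precisely when its weight $\lambda$ vanishes on $\lie{a}_0$, i.e.\ when $\theta(\lambda)=\lambda$, because $\theta$ acts as $+1$ on $\lie{t}_0$ and as $-1$ on $\lie{a}_0$. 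But \pref{thm:support} asserts that every $\lambda\in\supp(\omega^*)$ satisfies $\theta(\lambda)=-\lambda$, so the only weight on which $\pi_A$ can land non-trivially is $\lambda=0$. Thus $\pi_A(v^{\omega^*})\neq 0$ if and only if $0\in\supp(\omega^*)$, which, applying \pref{thm:support} once more with $\lambda=0$, is equivalent to $\omega^*\in\Lambda$.

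Combining this with \pref{cor:inclusion}, the summands on which $\tau$ does not vanish are precisely those with $\omega^*\in\Lambda$, i.e.\ precisely those making up the image of $\repf(\ad(M))\hookrightarrow\repf(M)$. Since the covering map $\pi\colon M\to\ad(M)$ makes pullback a scalar multiple of an isometric embedding, the image of $L^2(\ad(M))\to L^2(M)$ is closed and coincides with the Hilbert direct sum of the $\omega^*\in\Lambda$ summands; its orthogonal complement is the Hilbert direct sum of the remaining summands, which is exactly $\ker\tau$. The principal technical point, I expect, is the identification of the restriction of $\tau_0$ to an isotypic summand with the $A$-projection of the unique $\lie{k}$-invariant $v^{\omega^*}\in V(\omega)^*$ (as opposed to a self-map of $(V(\omega)^*)^K$); once this is correctly set up, the conclusion reduces cleanly to the $\lambda=0$ case of \pref{thm:support}.
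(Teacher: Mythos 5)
Your proposal is correct and follows essentially the same route as the paper: identify $\tau_0$ with $R_A$ via \pref{thm:reynolds}, observe that on each isotypic summand the $A$-averaging projection survives only on the zero weight space of the $\lie{k}$-invariant (since \pref{thm:support} forces every supporting weight to vanish on $\lie{t}$), apply \pref{thm:support} at $\lambda=0$ to get the criterion $\omega^*\in\Lambda$, invoke \pref{cor:inclusion}, and pass to closures. The paper's proof is simply a compressed version of this argument; your write-up fills in the same implicit steps.
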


\begin{proof}
Since $\repf(M)$ is dense in $L^2(M)$, the kernel of $\tau$ is the closure
    of the kernel of $\tau_o$.
Since $A$ is connected, \pref{thm:reynolds} implies that $\ker\tau_0$ consists of
    all those $V(\omega)$ such that $\supp(\omega^*)$ contains no weight
    restricting to zero on $\lie{a}$.
But by \pref{thm:support}, these are precisely the $V(\omega)$ such that
    $\omega^*\not\in\Lambda$, and by \pref{cor:inclusion} these are precisely
    the $V(\omega)$ not in the image of the injection $\repf(\ad(M))%
    \rightarrow\repf(M)$ induced by the covering map.
The result follows by taking closures.
\end{proof}


\section*{Acknowledgement}

The authors wish to thank Todor Milev and Alfred No\"el for helpful
    conversations.

\bibliographystyle{amsalpha}
\bibliography{torus}

\def\cprime{$'$}
\providecommand{\bysame}{\leavevmode\hbox to3em{\hrulefill}\thinspace}
\providecommand{\MR}{\relax\ifhmode\unskip\space\fi MR }
\providecommand{\MRhref}[2]{%
  \href{http://www.ams.org/mathscinet-getitem?mr=#1}{#2}
}
\providecommand{\href}[2]{#2}
\begin{thebibliography}{Hum72}

\bibitem[Bes78]{besse78}
Arthur~L. Besse, \emph{Manifolds all of whose geodesics are closed}, Ergebnisse
  der Mathematik und ihrer Grenzgebiete [Results in Mathematics and Related
  Areas], vol.~93, Springer-Verlag, Berlin, 1978, With appendices by D. B. A.
  Epstein, J.-P. Bourguignon, L. B{\'e}rard-Bergery, M. Berger and J. L.
  Kazdan. \MR{496885 (80c:53044)}

\bibitem[BtD85]{bd85}
Theodor Br{\"o}cker and Tammo tom Dieck, \emph{Representations of compact {L}ie
  groups}, Graduate Texts in Mathematics, vol.~98, Springer-Verlag, New York,
  1985. \MR{781344 (86i:22023)}

\bibitem[Dar93]{darb94}
Gaston Darboux, \emph{Le\c cons sur la th\'eorie g\'en\'erale des surfaces.
  {III}, {IV}}, Les Grands Classiques Gauthier-Villars. [Gauthier-Villars Great
  Classics], \'Editions Jacques Gabay, Sceaux, 1993, Lignes g{\'e}od{\'e}siques
  et courbure g{\'e}od{\'e}sique. Param{\`e}tres diff{\'e}rentiels.
  D{\'e}formation des surfaces. [Geodesic lines and geodesic curvature.
  Differential parameters. Deformation of surfaces], D{\'e}formation infiniment
  petite et repr{\'e}sentation sph{\'e}rique. [Infinitely small deformation and
  spherical representation], Reprint of the 1894 original (III) and the 1896
  original (IV), Cours de G{\'e}om{\'e}trie de la Facult{\'e} des Sciences.
  [Course on Geometry of the Faculty of Science]. \MR{1365962 (97c:01046b)}

\bibitem[Fun13]{funk13}
P.~Funk, \emph{\"{U}ber {F}l\"achen mit lauter geschlossenen geod\"atischen
  {L}inien}, Math. Ann. \textbf{74} (1913), no.~2, 278--300. \MR{1511763}

\bibitem[GG04]{gasqui04}
Jacques Gasqui and Hubert Goldschmidt, \emph{Radon transforms and the rigidity
  of the {G}rassmannians}, Annals of Mathematics Studies, vol. 156, Princeton
  University Press, Princeton, NJ, 2004. \MR{2034221 (2005d:53081)}

\bibitem[Gri92]{gri92}
Eric~L. Grinberg, \emph{Aspects of flat {R}adon transforms}, Geometric analysis
  ({P}hiladelphia, {PA}, 1991), Contemp. Math., vol. 140, Amer. Math. Soc.,
  Providence, RI, 1992, pp.~73--85. \MR{1197589 (94a:53104)}

\bibitem[Gui76]{guillemin76}
Victor Guillemin, \emph{The {R}adon transform on {Z}oll surfaces}, Advances in
  Math. \textbf{22} (1976), no.~1, 85--119. \MR{0426063 (54 \#14009)}

\bibitem[Gui79]{gui79}
\bysame, \emph{Some microlocal aspects of analysis on compact symmetric
  spaces}, Seminar on {M}icrolocal {A}nalysis, Ann. of Math. Stud., vol.~93,
  Princeton Univ. Press, Princeton, N.J., 1979, pp.~79--111. \MR{560313
  (82c:43011)}

\bibitem[GW09]{gw09}
Roe Goodman and Nolan~R. Wallach, \emph{Symmetry, representations, and
  invariants}, Graduate Texts in Mathematics, vol. 255, Springer, Dordrecht,
  2009. \MR{2522486 (2011a:20119)}

\bibitem[Hel78]{helgason78}
Sigurdur Helgason, \emph{Differential geometry, {L}ie groups, and symmetric
  spaces}, Pure and Applied Mathematics, vol.~80, Academic Press Inc. [Harcourt
  Brace Jovanovich Publishers], New York, 1978. \MR{514561 (80k:53081)}

\bibitem[Hum72]{hum72}
James~E. Humphreys, \emph{Introduction to {L}ie algebras and representation
  theory}, Springer-Verlag, New York-Berlin, 1972, Graduate Texts in
  Mathematics, Vol. 9. \MR{0323842 (48 \#2197)}

\bibitem[Kna02]{knapp02}
Anthony~W. Knapp, \emph{Lie groups beyond an introduction}, second ed.,
  Progress in Mathematics, vol. 140, Birkh\"auser Boston Inc., Boston, MA,
  2002. \MR{1920389 (2003c:22001)}

\bibitem[KR71]{kr71}
B.~Kostant and S.~Rallis, \emph{Orbits and representations associated with
  symmetric spaces}, Amer. J. Math. \textbf{93} (1971), 753--809. \MR{0311837
  (47 \#399)}

\bibitem[Loo53]{loomis53}
Lynn~H. Loomis, \emph{An introduction to abstract harmonic analysis}, D. Van
  Nostrand Company, Inc., Toronto-New York-London, 1953. \MR{0054173 (14,883c)}

\bibitem[Mic73]{mich73}
Ren{\'e} Michel, \emph{Probl\`emes d'analyse g\'eom\'etrique li\'es \`a la
  conjecture de {B}laschke}, Bull. Soc. Math. France \textbf{101} (1973),
  17--69. \MR{0317231 (47 \#5778)}

\bibitem[Tan92]{tan92}
Jules Tannery, \emph{Sur une surface de r\'evolution du quatri\`eme degr\'e
  dont les lignes g\'eod\'esiques sont alg\'ebriques}, Bull.\ Sci.\ Math.,
  Paris (1892), 190--192.

\bibitem[Zol03]{zoll03}
Otto Zoll, \emph{Ueber {F}l\"achen mit {S}charen geschlossener geod\"atischer
  {L}inien}, Math. Ann. \textbf{57} (1903), no.~1, 108--133. \MR{1511201}

\end{thebibliography}
\end{document}